\title{
Recovering a perturbation of a matrix polynomial from a perturbation of its linearization 
}
\author{Andrii Dmytryshyn$^*$}
\date{\vskip-0.4cm\small{$^*$School of Science and Technology, \"Orebro University, 701 82 \"Orebro, Sweden}
\newline 
\vskip0.28cm
\small{\rm Dedicated to the memory of my father, Roman Dmytryshyn (1959--2020)}
\vskip-0.2cm}
\DeclareMathOperator{\orb}{O}
\DeclareMathOperator{\inv}{ i}
\DeclareMathOperator{\co}{c}
\DeclareMathOperator{\GSYL}{GSYL}
\DeclareMathOperator{\im}{Im}
\DeclareMathOperator{\re}{Re}
\DeclareMathOperator{\vv}{vec}
\DeclareMathOperator{\Ind}{Ind}
\DeclareMathOperator{\grade}{grade}
\newcommand{\ddd}{
\text{\begin{picture}(12,8)
\put(-2,-4){$\cdot$}
\put(3,0){$\cdot$}
\put(8,4){$\cdot$}
\end{picture}}}
\renewcommand{\le}{\leqslant}
\renewcommand{\ge}{\geqslant}
\newtheorem{theorem}{Theorem}[section]
\newtheorem{lemma}{Lemma}[section]
\newtheorem{algorithm}{Algorithm}[section]
\theoremstyle{definition}
\newtheorem{definition}{Definition}[section]
\theoremstyle{remark}
\newtheorem{remark}{Remark}[section]
\newtheorem{corollary}{Corollary}[section]
\newtheorem{example}{Example}[section]
\newcommand{\hide}[1]{}
\definecolor{mygreen}{RGB}{28,172,0} 
\definecolor{mylilas}{RGB}{170,55,241}
\begin{document}
\maketitle

\begin{abstract}


A number of theoretical and computational problems for matrix polynomials are solved by passing to linearizations. Therefore a perturbation theory results for linearizations need to be related back to matrix polynomials. In this paper we present an algorithm that finds which perturbation of matrix coefficients of a matrix polynomial corresponds to a given perturbation of the entire linearization pencil. Moreover we find transformation matrices that, via strict equivalence, transform a perturbation of the linearization to the linearization of a perturbed polynomial. For simplicity, we present the results for the first companion linearization but they can be generalized to a broader class of linearizations.  
%
\end{abstract}



\section{Introduction} 
Nonlinear eigenvalue problems play an important role in mathematics and its applications, see e.g., the surveys \cite{GuTi17,MaMT15,TiMe01}. 
In particular, polynomial eigenvalue problems have been receiving much attention \cite{BHMS13,DJKV19,DLPV18,HiMM04,KaTi15,KrSW09}. 
Recall that 
\begin{equation} 
\label{matpol}
P(\lambda) = \lambda^{d}A_{d} + \dots +  \lambda A_1 + A_0, 
\quad \ A_i \in \mathbb C^{m \times n},  \text{  and } i=0, \dots, d 
\end{equation}
is a {\it matrix polynomial} and that the number $d$ is called a {\it grade} of $P(\lambda)$. If $A_d \neq 0$ then the grade coincides with the {\it degree} of a polynomial. Frequently, complete eigenstructures, i.e. elementary divisors and minimal indices of matrix polynomials (for the definitions, see e.g., \cite{DeDM12,DJKV19}) provide an understanding of properties and behaviours of the underlying physical systems and thus are the actual objects of interest. 
Complete eigenstructure is usually computed by passing to a {\it (strong) linearization} which replaces a matrix polynomial by a matrix pencil, i.e. matrix polynomials of degree $d = 1$, with the same finite (and infinite) elementary divisors and with the known changes in the minimal indices, see more details in \cite{MaMT15}. For example, a classical linearization of \eqref{matpol}, used in this paper, is the first companion form 
\begin{equation}
\label{1stform}
{\cal C}^1_{P(\lambda)}=\lambda \begin{bmatrix}
A_d&&&\\
&I_n&&\\
&&\ddots&\\
&&&I_n\\
\end{bmatrix} + \begin{bmatrix}
A_{d-1}&A_{d-2}&\dots&A_{0}\\
-I_n&&&\\ 
&\ddots&&\\
&&-I_n&\\
\end{bmatrix}, 
\end{equation}
where $I_n$ is the $n\times n$ identity matrix and all nonspecified blocks are zeros. 
%

\hide{
Due to challenging applications
\cite{BHMS13,HiMM04,Ipse04,MeVo04,TiMe01}, matrix polynomials have received much
attention in the last decade, resulting in rapid developments of corresponding
theories \cite{DJKV19,DLPV18,JoKV13} and computational
techniques \cite{BHMS13,HiMM04,KaTi15,KrSW09} (see also the recent survey
\cite{MaMT15}). In a number of cases, the canonical structure information, i.e.
elementary divisors and minimal indices of the matrix polynomials, are the
actual objects of interest. This information is usually computed via
linearizations \cite{BHMS13}, i.e. matrix polynomials of degree $d = 1$ which are matrix
pencils with a particular block-structure. This in turn leads to challenging theoretical and computational problems for the linearizations. 
%
%
How small perturbations may change the canonical structure information can be
studied through constructing the orbit and bundle closure hierarchy (or
stratification) graphs.  The theory to
compute and construct the stratification graphs are already known for several
matrix problems: matrices under similarity (i.e., Jordan canonical form)
\cite{BoTh80,EdEK99,MaPa83,KrPr81}, matrix pencils (i.e., Kronecker canonical
form) \cite{EdEK99}, skew-symmetric matrix pencils \cite{DmKa14},
controllability and observability pairs \cite{ElJK09}, state-space system
pencils \cite{DmJK17}, as well as full (normal) rank matrix polynomials
\cite{DJKV19,JoKV13}. Many of these results are already implemented in {\it
  StratiGraph} \cite{Joha06b,JoJo14,KaJJ12}, which is a java-based tool
developed to construct and visualize such closure hierarchy graphs. The {\it
  Matrix Canonical Structure} (MCS) {\it Toolbox} for Matlab
\cite{DmJK13,Joha06b,JoJo14} was also developed for simplifying the work with
the matrices in canonical forms and connecting Matlab with StratiGraph. For more
details on each of these cases we recommend to check the corresponding papers
and their references; some control applications are discussed in \cite{KaJJ12}.

A number of problems for matrix polynomials are solved by passing to so called linearizations, that is by constructing a matrix pencil (a matrix polynomial of degree $1$) with the same complete eigenstructure as the given matrix polynomial. This in turn leads to challenging theoretical and computational problems for the linearizations. 
}

In this paper, we find which perturbation of matrix coefficients of a given matrix polynomial corresponds to a given perturbation of the entire linearization pencil. 
To be exact, we find such a perturbation of matrix polynomial coefficients that the linearization of this perturbed polynomial \eqref{deformlin}, has the same complete eigenstructure as a given perturbed linearization \eqref{fulldeform}. We also note that, the existence of such a perturbation \eqref{deformlin} was proven before for Fiedler-type linearizations \cite{Dmyt17,DJKV19,VaDe83}, and even for a larger class of block-Kronecker linearizations \cite{DLPV18}, but this existence also follows from the convergence of the algorithm developed in this paper. 
%



The results of this paper can be applied to a number of problems in numerical linear algebra. One example is solving various distance problems for matrix polynomials if the corresponding problems are solved for matrix pencils, e.g., finding a singular matrix polynomials nearby a given matrix polynomial \cite{ByHM98,GiHL17,GuLM17}. Another application lies in the stratification theory \cite{Dmyt17,DJKV19}: constructing an explicit perturbation of a matrix polynomial when a perturbation of its linearization is known. This will allow to say which perturbation does a certain change to the complete eigenstructure of a given polynomial. (In \cite{DFKK15,FuKS14,FKKS19} the explicit perturbations for investigating such changes for matrix pencils, bi- and sesquilinear forms are derived.) 
Moreover, our result may also be useful for investigating the backward stability of the polynomial eigenvalue problems sovled by using the backward stable methods on the linearizations, see e.g., \cite{Tiss00}.

\hide{
 of an $m\times n$ matrix polynomial $P(\lambda)$ associated with $\sigma$ in the $(m+m \co (\sigma) + n \inv (\sigma)) \times  (n+m \co (\sigma) + n \inv (\sigma))$ matrix pencil
$$
{\cal F}_{P(\lambda)}^{\sigma} := \lambda 
\begin{bmatrix}
A_k & \\
& I_{m \co (\sigma) + n \inv (\sigma)}
\end{bmatrix}
 - L_{\sigma}.
$$ 

Using coefficients of the matrix polynomial, define  
\begin{align*}
L_k := 
\begin{bmatrix}
A_k&\\
&I_{(k-1)n}\\
\end{bmatrix}, \qquad 
L_0 := 
\begin{bmatrix}
I_{(k-1)n}&\\
&-A_0\\
\end{bmatrix}, \quad \text{ and }\\
L_i := 
\begin{bmatrix}
I_{(k-i-1)n}&&&\\
&-A_i&I_n&\\
&I_n&0&\\
&&&I_{(i-1)n}\\
\end{bmatrix}, \qquad i=1, \dots , k-1.
\end{align*}
Given a bijection $\sigma: \{0,1, \dots , k-1\} \to \{1, \dots , k\}$ the Fiedler pencil of ${\cal F}_{P(\lambda)}^{\sigma}$ associated with
$\sigma$ is the $kn \times kn$ matrix pencil
$$
{\cal F}_{P(\lambda)}^{\sigma} := \lambda L_k - L_{\sigma^{-1}(1)}L_{\sigma^{-1}(2)}\dots L_{\sigma^{-1}(k)}
$$
Note that $\sigma(i)$ describes the position of the factor $L_i$ in the product $L_{\sigma^{-1}(1)}L_{\sigma^{-1}(2)}\dots L_{\sigma^{-1}(k)}$, i.e.,  $\sigma(i)=j$ means that $L_i$ is the $j$th factor in the product. For brevity, we denote this product by
$L_{\sigma}:=L_{\sigma^{-1}(1)}L_{\sigma^{-1}(2)}\dots L_{\sigma^{-1}(k)}$
so that ${\cal F}_{P(\lambda)}^{\sigma} =  \lambda L_k - L_{\sigma}$.
}

\hide{
We say that a bijection $\sigma: \{0, 1, \dots , d-1\} \rightarrow \{1, \dots , d\}$ has a {\it consecution} at $k$ if $\sigma(k) < \sigma(k+1)$, and that $\sigma$ has an {\it inversion} at $k$ if $\sigma(k) > \sigma(k+1)$, where $k=0, \dots , d-2$.
Define $ \inv(\sigma)$ and $\co(\sigma)$ to be the total numbers of inversions and consecutions in $\sigma$, respectively. Note that 
\begin{equation}\label{invco}
\inv(\sigma)+\co(\sigma)=d-1
\end{equation}
 for every $\sigma$.

\begin{theorem}{\rm \cite{DeDM12}} \label{indch}
Let $P(\lambda)$ be an $m\times n$ matrix polynomial of degree $d \ge 2$, and let ${\cal F}_{P(\lambda)}^{\sigma}$ be its Fiedler linearization. If $0 \le  \varepsilon_1 \le \varepsilon_2 \le  \dots \le \varepsilon_s$ and $0 \le  \eta_1 \le \eta_2 \le  \dots \le \eta_t$ are the right and left minimal indices of $P(\lambda)$ then 
$$0 \le  \varepsilon_1 + \inv(\sigma) \le \varepsilon_2 + \inv(\sigma) \le  \dots \le \varepsilon_s + \inv(\sigma) \ 
\text{ and } \ 
0 \le  \eta_1 + \co(\sigma) \le \eta_2 + \co (\sigma) \le  \dots \le \eta_t +  \co(\sigma)$$
are the right and left minimal indices of ${\cal F}_{P(\lambda)}^{\sigma}$. 

\end{theorem}  
\noindent Note also that the Fiedler linearization ${\cal F}_{P(\lambda)}^{\sigma}$ has $m\co (\sigma) + n \inv(\sigma) + m$ rows and $m \co (\sigma) + n \inv(\sigma)+n$ columns. 
\begin{remark} \label{indcomp}
Theorem \ref{indch} can straightforwardly be applied to the first and second companion forms. 
For the first companion form ${\cal C}_{P(\lambda)}^{1}$, we have $\inv(\sigma) = d-1$ and $\co (\sigma) = 0$, and for the second companion form ${\cal C}_{P(\lambda)}^{2}$, we have $\inv(\sigma) = 0$ and $\co (\sigma) = d-1$. 
\end{remark}

\begin{corollary}{\rm \cite{DeDM12}} \label{indch1}
Let $P(\lambda)$ be an $m\times n$ matrix polynomial $d \ge 2$, and let ${\cal C}_{P(\lambda)}^{1}$ be its first companion linerization. If $0 \le  \varepsilon_1 \le \varepsilon_2 \le  \dots \le \varepsilon_t$ and $0 \le  \eta_1 \le \eta_2 \le  \dots \le \eta_t$ are the right and left minimal indices of $P(\lambda)$ then 
$$0 \le  \varepsilon_1 + d-1 \le \varepsilon_2 + d-1 \le  \dots \le \varepsilon_t + d-1 $$ 
and 
$$0 \le  \eta_1  \le \eta_2  \le  \dots \le \eta_t$$
are the right and left minimal indices of ${\cal C}_{P(\lambda)}^{1}$. 
\end{corollary}  

\begin{corollary}{\rm \cite{DeDM12}} \label{indch2}
Let $P(\lambda)$ be an $m\times n$ matrix polynomial $d \ge 2$, and let ${\cal C}_{P(\lambda)}^{2}$ be its second companion linerization. If $0 \le  \varepsilon_1 \le \varepsilon_2 \le  \dots \le \varepsilon_t$ and $0 \le  \eta_1 \le \eta_2 \le  \dots \le \eta_t$ are the right and left minimal indices of $P(\lambda)$ then 
$$0 \le  \varepsilon_1  \le \varepsilon_2  \le  \dots \le \varepsilon_t $$ 
and 
$$0 \le  \eta_1+ d-1  \le \eta_2+ d-1  \le  \dots \le \eta_t+ d-1$$
are the right and left minimal indices of ${\cal C}_{P(\lambda)}^{2}$. 
\end{corollary} 
}

\hide{
TBREWRITTEN: 

We can check that for a matrix polynomial $P(\lambda)$ with the finite elementary divisors $h_1,h_2, \dots, h_r$, the infinite elementary divisors $k_1,k_2, \dots , k_r$, and the left and right minimal indices $\varepsilon_1, \varepsilon_2,  \dots , \varepsilon_t$ and $\eta_1, \eta_2 ,  \dots , \eta_t$, the dimensions of the matrix pencils ${\cal F}^{\sigma}_{P(\lambda)}$ will indeed be $kn\times kn$. By \eqref{ist} from Theorem \ref{gist} for $P(\lambda)$ we have 
\begin{equation*}
\sum_{j=1}^r \delta_j + \sum_{j=1}^r \gamma_j + \sum_{j=1}^{n-r} \varepsilon_j + \sum_{j=1}^{m-r} \eta_j = dr 
\end{equation*}
We know that the elementary indices in ${\cal F}^{\sigma}_{P(\lambda)}$ are preserved and that the minimal indices are ``shifted''. Adding $(n-r)d$ to both sides we have  
\begin{equation*}
\sum_{j=1}^r \delta_j + \sum_{j=1}^r \gamma_j + \sum_{j=1}^{n-r} \varepsilon_j + \sum_{j=1}^{m-r} \eta_j + (n-r)d= dr +(n-r)d, 
\end{equation*}
which results in 
\begin{equation*}
2\sum_{j=1}^r \delta_j + 2\sum_{j=1}^r \gamma_j + 2\sum_{j=1}^{n-2r} \left( \varepsilon_j + \frac{1}{2}(d-1) \right) + (n-r)= dn. 
\end{equation*}
The left hand side is exactly the the sum of the sizes of the Jordan and singular blocks. Notably, each pair of indices $\varepsilon_j$ and $\eta_j$ ($\varepsilon_j=\eta_j$) of $P(\lambda)$ results in a singular block of the linearization ${\cal L}_{P(\lambda)}$ with $\varepsilon_j + \frac{1}{2}(d-1) + \eta_j + \frac{1}{2}(d-1) +1= \varepsilon_j+\eta_j+ d$ rows and the same number of columns.  


Theorems \ref{gist} and \ref{indch} allow us to describe all the possible combinations of elementary divisors and minimal indices that the Fiedler linearizations of matrix polynomials of certain degree may have. In other words, we can identify those orbits of general matrix pencils which contain pencils that are the linearizations of some $m \times n$ matrix polynomials of certain degree. 

$$
2\left( \sum_{i=1}^{q} \sum_{j=1}^{ r } h_j^{(i)} +  
\sum_{j=1}^{ r } k_j + \sum_{j=1}^{n-2r} (m_j+k-1) \right)+n-2r=kn,
$$
     
\begin{theorem} \label{regpol}
A complex skew-symmetric $kn\times kn$ matrix pencil ${\cal L}_{P(\lambda)}$ is a linearizetion of some regular skew-symmetric $n\times n$ matrix polynomial $P(\lambda)$ of the odd degree $k$ if and only if 
$$
2\left( \sum_{i=1}^{q} \sum_{j=1}^{ \frac{n}{2} } h_j^{(i)} +  
\sum_{j=1}^{ \frac{n}{2} } k_j \right)=kn,
$$
where $h_1,h_2, \dots, h_r$ are the finite elementary divisors and $k_1,k_2, \dots , k_r$ are the infinite elementary divisors of $P(\lambda)$ (as well as of ${\cal L}_{P(\lambda)}$, since the linearization is strong). 
\end{theorem}
\begin{proof}
Since the polynomial is regular it has a full rank, i.e., $2r=n$. Note also that $n$ is always even for regular skew-symmetric polynomials.   
\end{proof}
\begin{theorem} \label{oddpol}
A complex skew-symmetric $kn\times kn$ matrix pencil ${\cal L}_{P(\lambda)}$ is a linearizetion of some skew-symmetric $n\times n$ matrix polynomial $P(\lambda)$ of the odd degree $k$ if and only if 
$$
2\left( \sum_{i=1}^{q} \sum_{j=1}^{ r } h_j^{(i)} +  
\sum_{j=1}^{ r } k_j + \sum_{j=1}^{n-2r} (m_j+k-1) \right)+n-2r=kn,
$$
where $h_1,h_2, \dots, h_r$ are the finite elementary divisors and $k_1,k_2, \dots , k_r$ are the infinite elementary divisors of $P(\lambda)$ (as well as of ${\cal L}_{P(\lambda)}$, since the linearization is strong). 
\end{theorem}
\begin{proof}
Due to the reasons above you may know the answer immediately. :)   

Note that I do not know any conditions on $c$ (i.e., on the number of singular blocks). 
\end{proof}
}

\section{Perturbations of matrix polynomials and their linearizations} 
\label{sec:perturb}

Recall that for every matrix $X = [x_{ij}]$ its  Frobenius norm is given by $|| X || := || X ||_F= \left( \sum_{i,j} |x_{ij}|^2 \right)^{\frac{1}{2}}$. Hereafter, unless the otherwise is stated, we use the Frobenius norm for matrices. Let $P(\lambda)$ be an $m \times n$ matrix polynomial of grade $d$. Define a norm of a matrix polynomial $P(\lambda)$ as follows 
$$
|| P(\lambda) || := \left( \sum_{k=0}^d || A_k ||^2 \right)^{\frac{1}{2}}.
$$

\begin{definition} \label{perturb}
\ Let \ $P(\lambda)$ \ and \ $E(\lambda)$ \ be two $m \times n$ matrix polynomials, with $\grade P(\lambda) \geq \grade E(\lambda)$. A matrix polynomial $P(\lambda) + E(\lambda)$ is a {\it perturbation of an $m \times n$ matrix polynomial} $P(\lambda)$. 
\end{definition}
\noindent In this paper $|| E(\lambda) ||$ is typically small. Definition \ref{perturb} is also applicable to matrix pencils as a particular case of matrix polynomials.  

%
The first companion form ${\cal C}^1_{P(\lambda)}$ of $P(\lambda)$ is defined in \eqref{1stform} and is a well-known way to linearize matrix polynomials, i.e. to substitute an investigation of a matrix polynomial by an investigation of a certain matrix pencil with the same characteristics of interest. 
Namely, $P(\lambda)$ and ${\cal C}_{P(\lambda)}^1$ have the same elementary divisors (the same eigenvalues and their multiplicities), the same left minimal indices, and there is a simple relation between their right minimal indices (those of ${\cal C}_{P(\lambda)}^1$ are greater by $d-1$ than those of $P(\lambda)$), see \cite{DeDM12} for the definitions and more details. 
%
%
%
Define a (full) perturbation of the linearization of an $m \times n$ matrix polynomial of grade $d$ as follows 
\begin{equation}
\label{fulldeform}
\begin{aligned}
{\cal C}^1_{P(\lambda)} + {\cal E}&:= 
\lambda \begin{bmatrix}
A_d&&&\\
&I_n&&\\
&&\ddots&\\
&&&I_n\\
\end{bmatrix} + \begin{bmatrix}
A_{d-1}&A_{d-2}&\dots&A_{0}\\
-I_n&0&\dots&0\\
&\ddots&\ddots&\vdots\\
0&&-I_n&0\\
\end{bmatrix}\\
&+ \lambda  \begin{bmatrix}
E_{11}&E_{12}&E_{13}&\dots&E_{1d}\\
E_{21}&E_{22}&E_{23}&\dots&E_{2d}\\
E_{31}&E_{32}&E_{33}&\dots&E_{3d}\\
\vdots&\vdots&\vdots&\ddots&\vdots\\
E_{d1}&E_{d2}&E_{d3}&\dots&E_{dd}\\
\end{bmatrix} +  
\begin{bmatrix}
E'_{11}&E'_{12}&E'_{13}&\dots&E'_{1d}\\
E'_{21}&E'_{22}&E'_{23}&\dots&E'_{2d}\\
E'_{31}&E'_{32}&E'_{33}&\dots&E'_{3d}\\
\vdots&\vdots&\vdots&\ddots&\vdots\\
E'_{d1}&E'_{d2}&E'_{d3}&\dots&E'_{dd}\\
\end{bmatrix},
\end{aligned}
\end{equation}
and define a structured perturbation of the linearization, i.e. a perturbation in which only the blocks $A_i, i=0,1, \dots$ are perturbed
\begin{equation}
\begin{aligned}
\label{deformlin}
{\cal C}^1_{P(\lambda) + E(\lambda)}&= 
\lambda \begin{bmatrix}
A_d&&&\\
&I_n&&\\
&&\ddots&\\
&&&I_n\\
\end{bmatrix} + \begin{bmatrix}
A_{d-1}&A_{d-2}&\dots&A_{0}\\
-I_n&0&\dots&0\\
&\ddots&\ddots&\vdots\\
0&&-I_n&0\\
\end{bmatrix}\\
& + 
\lambda \begin{bmatrix}
F_d&0&\dots&0\\
0&0&\dots&0\\
\vdots&\vdots&\ddots&\vdots\\
0&0&\dots&0\\
\end{bmatrix} + \begin{bmatrix}
F_{d-1}&F_{d-2}&\dots&F_{0}\\
0&0&\dots&0\\
\vdots&\vdots&&\vdots\\
0&0&\dots&0\\
\end{bmatrix}.
\end{aligned}
\end{equation}
We also refer 
to $\eqref{deformlin}$ as the 
linearization of a perturbed matrix polynomial. 
Recall that, an $m \times n$ matrix pencil $\lambda A_1 + A_0$ is called {\it strictly equivalent} to $\lambda B_1 + B_0$ if there are non-singular matrices $Q$ and $R$ such that $Q^{-1}A_1R =B_1$ and $Q^{-1}A_0R=B_0$. Note that two $m \times n$ matrix pencils have the same complete eigenstructure if and only if they are strictly equivalent. Moreover, two $m \times n$ matrix polynomials of degree $d$, $P(\lambda)$ and $Q(\lambda)$, have the same complete eigenstructure if and only if ${\cal C}^1_{P(\lambda)}$ and ${\cal C}^1_{Q(\lambda)}$ are strictly equivalent. Now we can state one of our goals as finding a perturbation $E(\lambda)$ such that ${\cal C}^1_{P(\lambda)} + {\cal E}$ and ${\cal C}^1_{P(\lambda)+ E(\lambda)}$ are strictly equivalent. The existence of such a perturbation $E(\lambda)$ is known and 
stated in Theorem \ref{deform}, which is a simplified version of Theorem~5.21 in \cite{DLPV18}, it is also a slightly adapted formulation of Theorem~2.5 in \cite{DmDo17}, see also \cite{DJKV19,JoKV13,VaDe83}. 

\begin{theorem} \label{deform}%
  Let $P(\lambda)$ be an $m\times n$ matrix polynomial of degree $d$ and let
  ${\cal C}^1_{P(\lambda)}$ be its first companion form. If ${\cal
      C}^1_{P(\lambda)} + {\cal E}$ is a perturbation of ${\cal C}^1_{P(\lambda)}$ such
  that
  \[
   ||{\cal E}|| = || ({\cal C}^1_{P(\lambda)} + {\cal E}) - {\cal C}^1_{P(\lambda)} || < \frac{\pi}{12 \, d^{3/2}} \, ,
  \]
  then ${\cal C}^1_{P(\lambda)} + {\cal E}$ is strictly equivalent to the linearization of the perturbed polynomial ${\cal C}^1_{P(\lambda)+ E(\lambda)}$, i.e. there exist two nonsingular
  matrices $U$ and $V$ (they are small perturbations of the identity matrices)
  such that
  \begin{equation*} 
    U \cdot ({\cal C}^1_{P(\lambda)} + {\cal E}) \cdot V = {\cal C}^1_{P(\lambda)+ E(\lambda)},  
  \end{equation*}  
  moreover, 
  \[
    || {\cal C}^1_{P(\lambda) + E(\lambda)} - {\cal C}^1_{P(\lambda)} || \leq 4 \, d \, (1+||P(\lambda)||_F) \;  || {\cal E} || \, .
  \]
\end{theorem}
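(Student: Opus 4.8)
The plan is to split the perturbed pencil \eqref{fulldeform}, written as ${\cal C}^1_{P(\lambda)}+{\cal E}=\lambda(M_1+\Delta_1)+(M_0+\Delta_0)$, into its first $m$ rows and its remaining $(d-1)n$ rows; to restore the latter, by a small strict equivalence, to the exact form they take in every first companion form; and then to sweep what is left into the first block row. The fact driving this is that the bottom $(d-1)n$ block rows of ${\cal C}^1_{P(\lambda)}$ do not involve $P(\lambda)$ at all: they equal the fixed $(d-1)n\times dn$ pencil
\[
{\cal D}(\lambda)=\begin{bmatrix} -I_n & \lambda I_n & & \\ & \ddots & \ddots & \\ & & -I_n & \lambda I_n \end{bmatrix},
\]
which has full row rank for every $\lambda\in\mathbb{C}\cup\{\infty\}$ (hence no elementary divisors) and whose only invariants are $n$ right minimal indices, all equal to $d-1$. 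In particular ${\cal D}(\lambda)$ is a generic pencil of its size, so its strict equivalence orbit is open and a sufficiently small perturbation of ${\cal D}(\lambda)$ is again strictly equivalent to it; the nonsingular blocks $-I_{(d-1)n}+(\text{small})$ of $M_0+\Delta_0$ (bottom $(d-1)n$ rows, first $(d-1)n$ columns) and $I_{(d-1)n}+(\text{small})$ of $M_1+\Delta_1$ (bottom $(d-1)n$ rows, last $(d-1)n$ columns) are the ingredients that make the transformations realizing this equivalence near the identity.

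With this, the argument runs in two steps. \emph{Step~1.} Writing ${\cal E}_{\mathrm b}$ for the restriction of ${\cal E}$ to the bottom $(d-1)n$ rows, the rigidity of ${\cal D}(\lambda)$ supplies nonsingular constant matrices $V$ (acting on all $dn$ columns) and $U_{\mathrm b}$ (acting on the bottom $(d-1)n$ rows only), products of near-identity elementary factors built from the nonsingular blocks above, with $U_{\mathrm b}\,[{\cal D}(\lambda)+{\cal E}_{\mathrm b}]\,V={\cal D}(\lambda)$. After this the pencil has the exact bottom block rows of a companion form, and the same $V$ has turned its first block row into some $m\times dn$ pencil $R(\lambda)=\lambda R_1+R_0$. \emph{Step~2.} Add constant multiples of the (now exact) bottom block rows to the first block row, i.e. replace $R(\lambda)$ by $R(\lambda)+U_{\mathrm{tb}}\,{\cal D}(\lambda)$ with $U_{\mathrm{tb}}\in\mathbb{C}^{m\times(d-1)n}$. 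Since the $\lambda$-part of ${\cal D}(\lambda)$ is supported in block columns $2,\dots,d$ and its constant part in block columns $1,\dots,d-1$, there is a unique $U_{\mathrm{tb}}$ annihilating all block columns $2,\dots,d$ of the $\lambda$-part of the first block row, and it leaves the constant part free; reading off the result gives coefficients $A_d+F_d,A_{d-1}+F_{d-1},\dots,A_0+F_0$, hence $E(\lambda)$. Collecting the operations, the overall column transformation is $V$ and the overall row transformation $U$ is block upper triangular with diagonal blocks $I_m$ and $U_{\mathrm b}$ and off-diagonal block $U_{\mathrm{tb}}U_{\mathrm b}$; both $U$ and $V$ are nonsingular and near the identity, and $U\cdot({\cal C}^1_{P(\lambda)}+{\cal E})\cdot V={\cal C}^1_{P(\lambda)+E(\lambda)}$.

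The quantitative claims then follow by propagating norms through this construction. Each elementary factor of $V,U_{\mathrm b},U_{\mathrm{tb}}$ is $I+O(|| {\cal E} ||)$, and the hypothesis $|| {\cal E} ||<\pi/(12\,d^{3/2})$ is what keeps the $O(d)$ sub-blocks inverted along the way nonsingular and the product of the near-identity factors nonsingular and within a controlled ball of the identity; the $d^{-3/2}$ rate of the threshold comes from the explicit condition numbers of ${\cal D}(\lambda)$ and of its dual minimal basis $\Lambda_d(\lambda)\otimes I_n$, $\Lambda_d(\lambda)=[\lambda^{d-1},\dots,\lambda,1]^{T}$ — the vector through which the first block row is tied to a polynomial, via
\[
{\cal C}^1_{P(\lambda)}\,\bigl(\Lambda_d(\lambda)\otimes I_n\bigr)=\begin{bmatrix} P(\lambda)\\ 0\end{bmatrix}.
\]
For the last inequality, note that ${\cal C}^1_{P(\lambda)+E(\lambda)}-{\cal C}^1_{P(\lambda)}$ is supported in the first block row, so its Frobenius norm equals $|| E(\lambda) ||$; and $E(\lambda)$ is the first block row of ${\cal E}$ modified by lower-order coupling through $V-I$ and $U_{\mathrm{tb}}$, in which the coefficients $A_i$ enter only through $|| P(\lambda) ||_F$, so expanding and collecting terms gives $|| E(\lambda) ||\le 4\,d\,(1+|| P(\lambda) ||_F)\,|| {\cal E} ||$.

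The main obstacle is Step~1: turning the qualitative rigidity of ${\cal D}(\lambda)$ into explicit near-identity transformations with sharp enough norm control is a staircase-type perturbation estimate, and extracting the clean constant $\pi/(12\,d^{3/2})$ from it is delicate. For this reason the most economical route is to invoke the existing theory directly: ${\cal C}^1_{P(\lambda)}$ is the simplest strong block Kronecker (minimal bases) pencil, with explicit minimal basis ${\cal D}(\lambda)$ and dual $\Lambda_d(\lambda)\otimes I_n$, and Theorem~\ref{deform} is the specialization of Theorem~5.21 of \cite{DLPV18} (equivalently of Theorem~2.5 of \cite{DmDo17}; see also \cite{VaDe83}), the general block Kronecker bounds becoming the displayed expressions once the norms used there are converted to $|| {\cal E} ||$.
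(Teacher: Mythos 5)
Your final step---invoking Theorem~5.21 of \cite{DLPV18} (equivalently Theorem~2.5 of \cite{DmDo17})---is exactly what the paper does: it presents Theorem~\ref{deform} as a simplified/adapted formulation of those results and offers no independent proof, so your proposal matches the paper's treatment. Your preliminary constructive sketch (restoring the bottom block rows of the companion form and sweeping the residue into the first block row) is a reasonable outline of how the cited results are established, but, as you acknowledge, it does not by itself yield the explicit threshold $\pi/(12\,d^{3/2})$ or the bound $4\,d\,(1+\|P(\lambda)\|_F)\,\|{\cal E}\|$, and the paper does not attempt that derivation either.
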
 
Theorem \ref{deform} guaranties the existence of the structured perturbation \eqref{deformlin} and the transformation matrices $U$ and $V$. 
In the following section we present an algorithm that, for a given perturbation \eqref{fulldeform}, finds such a structured perturbation \eqref{deformlin}, 
and transformation matrices explicitly.  

\section{Reduction algorithm} 
In this section we describe our algorithm that by strict equivalence transformation reduces a full perturbation of a linearization pencil \eqref{fulldeform} to a structured perturbation of this pencil \eqref{deformlin}, i.e. a perturbation where only the blocks that correspond to the matrix coefficients of a matrix polynomial are perturbed.  
The corresponding transformation matrices are derived too.  
We also analyze important characteristics of the proposed algorithm and its outputs.

Define an unstructured perturbation ${\cal E}^u$ of the linearization ${\cal C}^1_{P(\lambda)}$ as a perturbation \eqref{fulldeform} where the blocks $E_{11}, E_{11}', E_{12}', \dots, E_{1d}'$ are substitutet with the zero blocks of the corresponding sizes. ${\cal E}^u$ consists of all the perturbation blocks that are not included in the structured perturbation \eqref{deformlin}, i.e. ${\cal E}^u$ consists of all the perturbations of the identity and zero blocks of the linearization ${\cal C}^1_{P(\lambda)}$.

In Section \ref{secconv} 
we show that unstructured part of perturbation tends to zero (entry-wise) as the number of iterations grows; in Section \ref{secbounds} we derive a bound on the norm of the resulting structured perturbation; in Section \ref{sectr} we explain how to construct the corresponding transformation matrices, i.e. matrices that reduce a full perturbation to a structured one. 

We note that the construction the corresponding transformation matrices in this paper is similar to the construction of the transformation matrices for the reduction to miniversal deformations of matrices in \cite{DmFS12,DmFS14}, as well as that the evaluation of the norm of the structured part has some similarities with the evaluation of the norm of the miniversal deformation of (skew-)symmetric matrix pencils in \cite{Dmyt16,Dmyt19}, see also \cite{DmFS12,DmFS14}. These similarities are due to the fact that our structured perturbation is in fact a versal deformation (but not miniversal), see the mentioned papers for the definitions and details. 


\begin{algorithm} 
\label{alg}
Let ${\cal C}^1_{P(\lambda)}$ be a first companion linearization of a matrix polynomial $P(\lambda)$ and ${\cal E}_1$ be a full perturbation of ${\cal C}^1_{P(\lambda)}$. 
\begin{itemize}
\item[] {Input:} Matrix polynomial $P(\lambda)$, perturbed matrix pencil ${\cal C}^1_{P(\lambda)}+{\cal E}_1$, and the tolerance parameter ${\rm tol}$;
\vskip0.2cm
\item[] {Initiation:} $U_1:=I$ and $V_1:=I$ 
\vskip0.2cm
\item[] {Computation:} While $|| {\cal E}^u_i || > {\rm tol}$ 
\begin{itemize}
\vskip0.2cm
\item solve the coupled Sylvester equations: \\ $\left(({\cal C}^1_{P(\lambda)} + {\cal E}_{i})X + Y({\cal C}^1_{P(\lambda)} + {\cal E}_{i})\right)^u = - {\cal E}^u_i$;
\vskip0.2cm
\item update the perturbation of the linearization: \\ $({\cal C}^1_{P(\lambda)} + {\cal E}_{i+1}):=(I+Y) ({\cal C}^1_{P(\lambda)} + {\cal E}_{i}) (I+X)$;
\vskip0.2cm
\item update the transformation matrices: \\ $U_{i+1} := (I+Y)U_{i}$ and $V_{i+1} := V_{i}(I+X)$; 
\vskip0.2cm
\item extract the new unstructured perturbation ${\cal E}^u_{i+1} $ to be eliminated; 
\vskip0.2cm
\item increase the counter: \ $i:=i+1$;
\end{itemize}
\vskip0.2cm
\item[] {Output:} Structurally perturbed linearization pencil ${\cal C}^1_{P(\lambda)+E(\lambda)}:={\cal C}^1_{P(\lambda)} + {\cal E}_{k}$, where ${\cal E}_{k}$ is a structured perturbation (since the norm of $||{\cal E}^u_{k}|| < {\rm tol}$), and the transformation matrices $U$ and $V$. 
\end{itemize}
\end{algorithm}
In the rest of the paper we investigate properties of this algorithm and perform numerical experiments. 
\subsection{Elimination of unstructured perturbation} 
\label{secconv}
We start by deriving an auxiliary lemma that will be used to prove that following Algorithm \ref{alg} results in a convergence of the unstructured perturbation to zero.  

For a given matrix $T$, define $\kappa(T):= \kappa_F(T) = ||T|| \cdot ||T^{\dagger}||$ to be a Frobenius condition number of $T$, see e.g., \cite{AvDT13, ChRa08, SuGo13}. 

\begin{lemma}
\label{nplemma}
Let $A,B,C,D,M$, and $N$ be $m\times n$ matrices and let $X$ and $Y$ be $n \times n$ and $m \times m$ matrices, respectively, that are the smallest norm solution of the system of coupled Sylvester equations 
\begin{equation}
\label{sysABCD}
\begin{aligned}
AX + YB&= M,\\
CX + YD&= N.
\end{aligned}
\end{equation}
Then 
\begin{equation}
\label{probbound}
||X|| \cdot ||Y|| \leq  \frac{\kappa(T)^2}{2(n||A||^2 + m ||B||^2 + n ||C||^2 + m ||D||^2)} \left(||M||^2 + ||N||^2\right), 
\end{equation}
where $T=\begin{bmatrix}
I_n \otimes A & B^T \otimes I_m \\
I _n\otimes C & D^T \otimes I_m 
\end{bmatrix}$ is the Kronecker product matrix associated with the system \eqref{sysABCD}. 
\end{lemma}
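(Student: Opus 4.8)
The plan is to rewrite the coupled Sylvester system \eqref{sysABCD} in Kronecker-product (vectorized) form and then apply a standard least-norm bound together with a Frobenius-norm identity for the coefficient matrix $T$. First I would apply the $\vv$ operator to both equations, using the identity $\vv(AXB) = (B^T \otimes A)\vv(X)$. Writing $x = \vv(X) \in \mathbb C^{n^2}$ and $y = \vv(Y) \in \mathbb C^{m^2}$, the system $AX + YB = M$, $CX + YD = N$ becomes
\[
T \begin{bmatrix} x \\ y \end{bmatrix} = \begin{bmatrix} \vv(M) \\ \vv(N) \end{bmatrix},
\qquad
T = \begin{bmatrix} I_n \otimes A & B^T \otimes I_m \\ I_n \otimes C & D^T \otimes I_m \end{bmatrix}.
\]
Since $X,Y$ is the smallest-norm solution, the stacked vector $[x;y]$ is the minimum-norm solution of this linear system, i.e. $[x;y] = T^{\dagger}[\vv(M);\vv(N)]$. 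Hence $\|x\|^2 + \|y\|^2 = \|[x;y]\|_2^2 \le \|T^{\dagger}\|_2^2\,(\|M\|^2 + \|N\|^2)$, and since the Frobenius norm dominates the spectral norm, $\|T^{\dagger}\|_2 \le \|T^{\dagger}\|_F$. Also $\|X\|\cdot\|Y\| \le \tfrac12(\|X\|^2 + \|Y\|^2) = \tfrac12(\|x\|^2+\|y\|^2)$ by AM–GM (recall $\|X\|_F = \|\vv(X)\|_2$). Combining these gives
\[
\|X\|\cdot\|Y\| \le \tfrac12 \|T^{\dagger}\|_F^2\,(\|M\|^2 + \|N\|^2).
\]

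It remains to convert the factor $\|T^{\dagger}\|_F^2$ into the claimed form $\kappa(T)^2 / (n\|A\|^2 + m\|B\|^2 + n\|C\|^2 + m\|D\|^2)$. By definition $\kappa(T) = \|T\|_F\,\|T^{\dagger}\|_F$, so $\|T^{\dagger}\|_F^2 = \kappa(T)^2 / \|T\|_F^2$, and the whole statement reduces to the clean identity
\[
\|T\|_F^2 = n\|A\|^2 + m\|B\|^2 + n\|C\|^2 + m\|D\|^2.
\]
This is the computational heart of the argument, and it follows from the block structure of $T$ together with the elementary facts $\|I_k \otimes A\|_F^2 = k\,\|A\|_F^2$ and $\|B^T \otimes I_m\|_F^2 = m\,\|B^T\|_F^2 = m\,\|B\|_F^2$ (using $\|B^T\|_F = \|B\|_F$), applied to each of the four blocks $I_n\otimes A$, $B^T\otimes I_m$, $I_n\otimes C$, $D^T\otimes I_m$ and summing (the Frobenius norm of a block matrix being the square root of the sum of the squared Frobenius norms of its blocks). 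Substituting $\|T^{\dagger}\|_F^2 = \kappa(T)^2/(n\|A\|^2 + m\|B\|^2 + n\|C\|^2 + m\|D\|^2)$ into the previous display yields exactly \eqref{probbound}.

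I do not anticipate a serious obstacle here: the only subtle point is making sure the "smallest norm solution" of the coupled system corresponds precisely to the minimum-norm (pseudoinverse) solution of the stacked linear system $T[x;y] = [\vv M;\vv N]$, which is immediate once one observes that stacking $[x;y]$ is an isometry between the pair $(X,Y)$ with norm $(\|X\|^2+\|Y\|^2)^{1/2}$ and the Euclidean vector space. One should also note that the bound is vacuous (the denominator vanishes) only when $A=B=C=D=0$, a degenerate case one may exclude. Everything else is the routine Kronecker-algebra computation sketched above, so the proof is short.
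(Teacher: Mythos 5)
Your proposal is correct and follows essentially the same route as the paper's proof: vectorize the coupled system via Kronecker products, use the minimum-norm pseudoinverse solution to bound $\|X\|^2+\|Y\|^2$ by $\|T^{\dagger}\|^2(\|M\|^2+\|N\|^2)$, rewrite $\|T^{\dagger}\|^2$ as $\kappa(T)^2/\|T\|_F^2$ with $\|T\|_F^2 = n\|A\|^2+m\|B\|^2+n\|C\|^2+m\|D\|^2$, and finish with the AM--GM inequality $\|X\|\cdot\|Y\|\le\tfrac12(\|X\|^2+\|Y\|^2)$. Your explicit remark that the spectral norm of $T^{\dagger}$ is dominated by its Frobenius norm is a minor added detail the paper leaves implicit, but the argument is the same.
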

\begin{proof}
Using Kronecker product we can rewrite the system of coupled Sylvester equations as a system of linear equations $Tx=b$, or explicitly 
\begin{equation}
\label{kron}
\begin{bmatrix}
I_n \otimes A & B^T \otimes I_m \\
I _n\otimes C & D^T \otimes I_m 
\end{bmatrix}
\begin{bmatrix}
\vv (X) \\
\vv (Y)
\end{bmatrix} = 
\begin{bmatrix}
\vv (M) \\
\vv (N)
\end{bmatrix}. 
\end{equation}
The least-squares solution of the smallest norm of such system can be written as $x = T^{\dagger} b$, implying $||x|| \leq ||T^{\dagger}|| \cdot ||b||$ or more explicitly, and taking into account $||x||^2 = ||X||^2 + ||Y||^2$:  
\begin{equation}
\begin{aligned}
\label{bound1}
||X||^2 + ||Y||^2 &\leq ||T^{\dagger}||^2 \left(||M||^2 + ||N||^2\right)= \frac{\kappa(T)^2}{||T||^2} \left(||M||^2 + ||N||^2\right) \\
& = \frac{\kappa(T)^2}{\left(n||A||^2 + m ||B||^2 + n ||C||^2 + m ||D||^2\right)} \left(||M||^2 + ||N||^2\right),
\end{aligned}
\end{equation}
where $\kappa(T)$ is the Frobenius condition number of $T$. 
Taking into account that 
\begin{equation*}
||X|| \cdot ||Y|| \leq \frac{1}{2}\left(||X||^2 + ||Y||^2\right),
\end{equation*}
we obtain 
\begin{equation*}
||X|| \cdot ||Y|| \leq \frac{\kappa(T)^2}{2(n||A||^2 + m ||B||^2 + n ||C||^2 + m ||D||^2)} \left(||M||^2 + ||N||^2\right).
\end{equation*}
\end{proof}
The bounding expression in \eqref{probbound} depends on the conditioning of the problem \eqref{kron} as well as on how small (normwise) the right-hand-side of \eqref{kron} (or, respectively, \eqref{sysABCD}) is, comparing to the matrix coefficients in the left-hand-side. The conditioning of \eqref{sysABCD} may actually be better than the conditioning of \eqref{kron}. Thus for very ill-conditioned problems and large perturbations, it may be reasonable to use a solver 
for \eqref{sysABCD} instead of passing to the Kronecker product matrices. 

In the following theorem we prove that Algorithm \ref{alg} eliminates the unstructured perturbation, i.e. we show that the norm of the unstructured part of the perturbation tends to zero as the number of iterations grows.

\begin{theorem} 
Let ${\cal C}^1_{P(\lambda)} + {\cal E}_{1}$ be a perturbation of the linearization and let $\alpha ||{\cal E}_{1}|| < 1$, where $\alpha = \alpha ({\cal C}^1_{P(\lambda)},{\cal E}_{1})$ is defined in \eqref{aldef}, 
then $||{\cal E}_{i}^u|| \rightarrow 0$ if $i \rightarrow \infty$. 
\end{theorem}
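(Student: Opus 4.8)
The plan is to track how the unstructured part $\|{\cal E}_i^u\|$ evolves from one iteration to the next and to show it is dominated by a quadratically convergent recursion. First I would set up notation: write ${\cal E}_i = {\cal E}_i^s + {\cal E}_i^u$ as the sum of its structured and unstructured parts, and observe that in the update step
\[
{\cal C}^1_{P(\lambda)} + {\cal E}_{i+1} = (I+Y_i)\,({\cal C}^1_{P(\lambda)} + {\cal E}_i)\,(I+X_i)
= {\cal C}^1_{P(\lambda)} + {\cal E}_i + \bigl(({\cal C}^1_{P(\lambda)}+{\cal E}_i)X_i + Y_i({\cal C}^1_{P(\lambda)}+{\cal E}_i)\bigr) + Y_i({\cal C}^1_{P(\lambda)}+{\cal E}_i)X_i .
\]
By the choice of $X_i, Y_i$ as the smallest-norm solution of the coupled Sylvester equations, the linear (in $X_i,Y_i$) correction term cancels ${\cal E}_i^u$ exactly on the unstructured positions. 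Hence
\[
{\cal E}_{i+1}^u = \bigl(Y_i({\cal C}^1_{P(\lambda)}+{\cal E}_i)X_i\bigr)^u ,
\]
so that $\|{\cal E}_{i+1}^u\| \le \|Y_i\|\cdot\|X_i\|\cdot\|{\cal C}^1_{P(\lambda)}+{\cal E}_i\|$, since extracting a sub-block does not increase the Frobenius norm.

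Next I would apply Lemma~\ref{nplemma} to the coupled Sylvester system solved at step $i$: with $A,B,C,D$ built from ${\cal C}^1_{P(\lambda)}+{\cal E}_i$ and right-hand side $-{\cal E}_i^u$, the lemma gives
\[
\|X_i\|\cdot\|Y_i\| \le \frac{\kappa(T_i)^2}{2\bigl(n\|A\|^2 + m\|B\|^2 + n\|C\|^2 + m\|D\|^2\bigr)}\,\bigl(\|{\cal E}_i^u\|^2\bigr),
\]
where $T_i$ is the Kronecker matrix of the system at step $i$. The quantity $\alpha = \alpha({\cal C}^1_{P(\lambda)},{\cal E}_1)$ referenced in the statement (defined in the displayed equation \eqref{aldef}, which appears after this excerpt) is presumably a uniform bound that packages $\kappa(T_i)^2 / \bigl(n\|A\|^2+\cdots+m\|D\|^2\bigr)$ together with $\|{\cal C}^1_{P(\lambda)}+{\cal E}_i\|$, valid as long as the iterates stay in a neighbourhood of ${\cal C}^1_{P(\lambda)}$. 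Combining the two displays yields a bound of the shape
\[
\|{\cal E}_{i+1}^u\| \le \alpha\,\|{\cal E}_i^u\|^2 \le \alpha\,\|{\cal E}_i\|^2 .
\]

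From here the argument is a standard quadratic-convergence bootstrap. Under the hypothesis $\alpha\|{\cal E}_1\| < 1$, set $q := \alpha\|{\cal E}_1\| < 1$. I would prove by induction that $\|{\cal E}_i\|$ stays controlled — the structured part added at each step is itself bounded in terms of ${\cal E}_i^u$ via the same Sylvester estimate, so $\|{\cal E}_{i+1}\| \le \|{\cal E}_i\| + (\text{something small})$, and a geometric-series argument keeps all iterates inside the neighbourhood where the uniform constant $\alpha$ is valid. Granting that, the recursion $\|{\cal E}_{i+1}^u\| \le \alpha \|{\cal E}_i^u\|^2$ unwinds to $\alpha\|{\cal E}_i^u\| \le (\alpha\|{\cal E}_1^u\|)^{2^{i-1}} \le q^{2^{i-1}}$, whence $\|{\cal E}_i^u\| \le \alpha^{-1} q^{2^{i-1}} \to 0$ as $i\to\infty$, in fact doubly exponentially fast.

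The main obstacle is the bookkeeping in the bootstrap step: one must show that the iterates ${\cal C}^1_{P(\lambda)}+{\cal E}_i$ never wander so far from ${\cal C}^1_{P(\lambda)}$ that the condition-number-type constant in Lemma~\ref{nplemma} blows up, i.e. that a single constant $\alpha$ genuinely works for all $i$. This requires simultaneously controlling $\|{\cal E}_i\|$ (not just $\|{\cal E}_i^u\|$) — the structured part grows, albeit by geometrically shrinking increments — and verifying that $T_i$ stays uniformly bounded away from rank deficiency on the relevant subspace. Everything else (the cancellation identity, the norm-of-subblock inequality, the unwinding of the quadratic recursion) is routine once $\alpha$ is pinned down.
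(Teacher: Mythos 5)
Your proposal is correct and follows essentially the same route as the paper: the same cancellation identity ${\cal E}_{i+1}^u = \left(Y_i({\cal C}^1_{P(\lambda)}+{\cal E}_i)X_i\right)^u$, the same application of Lemma~\ref{nplemma}, and the same unwinding of the quadratic recursion $||{\cal E}_{i+1}^u|| \leq \alpha ||{\cal E}_i^u||^2$. The ``main obstacle'' you flag (a uniform constant valid for all iterations) is not proved in the paper either: $\alpha$ is \emph{defined} in \eqref{aldef} as a supremum over all $i$, and its finiteness is taken as an assumption on the initial perturbation, so your bootstrap concern is absorbed into the hypothesis rather than argued.
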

\begin{proof}
We start by proving a bound for the norm of the unstructured part of a perturbation at the $(i+1)$-st step of the algorithm, using the norm of the unstructured part of a perturbation at the $i$-th step of the algorithm. Define ${\cal C}^1_{P(\lambda)} = \lambda W + \widetilde{W}$. 

Following Algorithm \ref{alg} we obtain matrices $X_i$ and $Y_i$ by solving the system of coupled Sylvester matrix equations: 
\begin{equation}
\label{initeq}
\begin{split}
\left((W+E_i)X_i + Y_i(W+E_i) \right)^u&=E_{i}^u,\\
\left((\widetilde{W}+\widetilde{E}_i)X_i + Y_i(\widetilde{W}+\widetilde{E}_i)  \right)^u&= \widetilde{E}_{i}^u.
\end{split}
\end{equation}
Using the solution $X_i$ and $Y_i$ of the system \eqref{initeq} we compute
\begin{align*}
W+E_{i+1} &:= (I+ Y_i)(W+E_i)(I+ X_i),\\
\widetilde{W}+\widetilde{E}_{i+1} &:= (I+ Y_i)(\widetilde{W}+\widetilde{E}_i)(I+ X_i), 
\end{align*}
%
%
or equivalently, 
\begin{align*}
E_{i+1} &:=E_{i}^s + \left(E_{i}^u + (W+E_i)X_i + Y_i(W+E_i) \right) + Y_i(W+E_i)X_i,\\
\widetilde{E}_{i+1} &:= \widetilde{E}_{i}^s + \left(\widetilde{E}_{i}^u  + (\widetilde{W}+\widetilde{E}_i)X_i + Y_i(\widetilde{W}+\widetilde{E}_i) \right) + Y_i(\widetilde{W}+\widetilde{E}_i)X_i.
\end{align*}
Since $X_i$ and $Y_i$ are a solution of \eqref{initeq} we have
\begin{align*}
E_{i+1} & = E_{i}^s + \left((W+E_i)X_i + Y_i(W+E_i) \right)^s + Y_i(W+E_i)X_i,\\
\widetilde{E}_{i+1} &= \widetilde{E}_{i}^s + \left((\widetilde{W}+\widetilde{E}_i)X_i + Y_i(\widetilde{W}+\widetilde{E}_i)  \right)^s +Y_i(\widetilde{W}+\widetilde{E}_i)X_i. 
\end{align*}
Splitting the perturbation into the structured and unstructured parts we obtain  
\begin{align*}
E_{i+1}^s &= E_{i}^s + \left((W+E_i)X_i + Y_i(W+E_i) \right)^s + \left(Y_i(W+E_i)Y_i\right)^s,\\
\widetilde{E}_{i+1}^s  &= \widetilde{E}_{i}^s + \left((\widetilde{W}+\widetilde{E}_i)X_i + Y_i(\widetilde{W}+\widetilde{E}_i)  \right)^s + \left(Y_i(\widetilde{W}+\widetilde{E}_i)X_i\right)^s, \\
E_{i+1}^u &= \left(Y_i(W+E_i)X_i\right)^u,\\
\widetilde{E}_{i+1}^u  &= \left(Y_i(\widetilde{W}+\widetilde{E}_i)X_i\right)^u.
\end{align*}
In general, $E_{i+1}^u$ and $\widetilde{E}_{i+1}^u$ are not zero matrices but we show that they tend to zero (entry-wise) when $i \rightarrow \infty$. Using the bound \eqref{probbound} on the Frobenious norm of $||E_{i+1}^u||$ we have:
\begin{align*}
||E_{i+1}^u|| & \leq ||\left(Y_i(W+E_i)X_i\right)^u|| \leq ||Y_i(W+E_i)X_i|| \leq ||X_i||\cdot ||Y_i|| \cdot ||W+E_i|| \\
& \leq \frac{\kappa(T_i)^2 ||W+E_i||}{2\left(n+m\right) \left(||W+E_i||^2 + ||\widetilde{W}+\widetilde{E}_i||^2\right)} \ ||{\cal E}_i^u||^2, 
\end{align*}
similarly, for the matrix $||\widetilde{E}_{i+1}^u||$,  
\begin{equation}
\label{unbound}
\begin{aligned}
||\widetilde{E}_{i+1}^u|| & \leq ||\left(Y_i(\widetilde{W}+\widetilde{E}_i)X_i\right)^u|| \leq ||Y_i(\widetilde{W}+\widetilde{E}_i)X_i|| \leq ||X_i ||\cdot ||Y_i|| \cdot ||\widetilde{W}+\widetilde{E}_i|| \\
& \leq \frac{\kappa(T_i)^2 ||\widetilde{W}+\widetilde{E}_i||}{2\left(n+m\right) \left(||W+E_i||^2 + ||\widetilde{W}+\widetilde{E}_i||^2\right)} \ ||{\cal E}_i^u||^2, 
\end{aligned}
\end{equation}
where 
\begin{equation}
\label{kronWW}
T_i=\begin{bmatrix}
I_{nd} \otimes (W+E_i) & (W+E_i)^T \otimes I_{m+n(d-1)} \\
I _{nd}\otimes (\widetilde{W}+\widetilde{E}_i) & (\widetilde{W}+\widetilde{E}_i)^T \otimes I_{m+n(d-1)} 
\end{bmatrix}
\end{equation} 
is the Kronecker product matrix associated with the system of coupled Sylvester equations \eqref{initeq}. 

Define $\alpha$ as follows   
\begin{equation}
\label{aldef}
\begin{aligned}
\hskip-1cm\alpha:= \sup_{i} \left\{ \frac{\kappa(T_i)^2 ||W+E_i||}{\sqrt{2}\left(n+m\right) \left(||W+E_i||^2 + ||\widetilde{W}+\widetilde{E}_i||^2\right)},\right. \qquad\qquad\qquad\\ 
\qquad\qquad\qquad\qquad\qquad\qquad \left. \frac{\kappa(T_i)^2 ||\widetilde{W}+\widetilde{E}_i||}{\sqrt{2}\left(n+m\right) \left(||W+ E_i||^2 + ||\widetilde{W}+\widetilde{ E}_i||^2\right)} 
\right\}. 
\end{aligned}
\end{equation}
Here we assume that our initial perturbation is such that $\kappa(T_i)$ does not change much and thus the supremum in the definition of $\alpha$ \eqref{aldef} is finite.  
Now the bounds on the unstructured part of the perturbation for the both matrices of the matrix pencil at the step $i+1$ can be written as follows
\begin{equation}
\label{alpbo1}
||E_{i+1}^u||  \leq \frac{\alpha}{\sqrt{2}} ||{\cal E}^u_i||^2 \quad \text{ and } \quad 
||\widetilde{E}_{i+1}^u||  \leq \frac{\alpha}{\sqrt{2}} ||{\cal E}^u_i||^2.
\end{equation}
This results into the bound for the whole pencil:
\begin{align}
\label{alpbo2}
||{\cal E}_{i+1}^u|| = \left(||E_{i+1}^u||^2 + ||\widetilde{E}_{i+1}^u||^2\right)^{\frac{1}{2}} 
& \leq \alpha ||{\cal E}_i^u||^2. 
\end{align}
Using the bounds \eqref{alpbo1} and \eqref{alpbo2} at each step we get 
\begin{equation}
\label{alpbo3}
\max \left\{ ||E_{k}^u||, ||\widetilde{E}_{k}^u|| \right\} \leq \left(\frac{\alpha}{\sqrt{2}}\right)^{2^{k-1}-1} ||{\cal E}_{1}^u||^{2^{k-1}} \text{ \ \ and \ \ } 
||{\cal E}_{k}^u|| \leq \alpha^{2^{k-1}-1} ||{\cal E}_{1}^u||^{2^{k-1}} . 
\end{equation}
If $\alpha ||{\cal E}_{1}|| < 1$ then the norm of the unstructured part of the perturbation tends to zero with the iteration grows. 
\end{proof}

\begin{remark}
In our case we should exclude some rows from \eqref{kronWW}, since we want to eliminate only the unstructured part of the perturbation ${\cal E}_i$. Therefore the norm of the solution of such the least-squares problem will be less than or equal to $||x||$, where $x= T^{\dagger} b$. Clearly, the bounds from Lemma~\ref{nplemma} remain valid.
\end{remark}
%
%
The sharpness of the bounds \eqref{alpbo3} depends on the value of $\alpha$ and on the size of an initial perturbation: the better conditioned the problem is and the smaller initial perturbation is, the better the bounds \eqref{alpbo3} are. Even if the problem is ill-conditioned we can still guarantee the convergence for small enough perturbations. Note that, a proper scaling of a matrix polynomial improves the conditioning of the problem, see e.g., \cite{DLPV18}. 
Moreover,  in practice, Algorithm \ref{alg} converges to a structured perturbation very well and requires only a small number of iterations, see the numerical experiments in Section~\ref{numexp}. 

\subsection{Bound on the norm of structured perturbation}
\label{secbounds}
In this section we find a bound on the resulting structured perturbation.
%
%
Similarly to the analysis in Section \ref{secconv} we have a dependency on the conditioning of the problem as well as on the norm of an original perturbation. Therefore we need to make an assumption that these quantities 
are small enough. 
\begin{theorem}
\label{thbounds}
Let ${\cal C}^1_{P(\lambda)} + {\cal E}_{1}$ be a perturbation of the linearization ${\cal C}^1_{P(\lambda)}$, $||{\cal E}_1|| = \varepsilon $, and $\alpha \varepsilon < 1$, where $\alpha = \alpha ({\cal C}^1_{P(\lambda)},{\cal E}_{1})$ 
is defined in \eqref{aldef}. Define also 
%
$
\beta:= \ \sup_{i} \ 
 \sqrt{\frac{2}{\left(n+m\right)}} \kappa(T_i)$ for the Kronecker product matrix $T_i$, see \eqref{kronWW}. Then $||{\cal E}^s|| < \varepsilon (1+ \beta) / (1 - \alpha \varepsilon)$. 

\end{theorem}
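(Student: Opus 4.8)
The plan is to bound the total structured perturbation $\mathcal{E}^s$ that accumulates over all iterations of Algorithm~\ref{alg} by summing the contributions added at each step. Recall from the proof of the previous theorem that at step $i$ the structured part is updated by
\[
\mathcal{E}^s_{i+1} = \mathcal{E}^s_{i} + \left(\text{cross terms }(W+E_i)X_i + Y_i(W+E_i),\ \text{etc.}\right)^s + \left(Y_i(\cdot)X_i\right)^s,
\]
so the \emph{increment} $\Delta_i := \mathcal{E}^s_{i+1} - \mathcal{E}^s_{i}$ satisfies, componentwise for the $\lambda$-part and the constant part, a bound of the form $\|\Delta_i\| \le \|(W+E_i)X_i + Y_i(W+E_i)\| + \|Y_i(W+E_i)X_i\|$ (and the analogue with $\widetilde{W}$). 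The first idea is to observe that the first bracket equals $-\mathcal{E}^u_i$ on the unstructured rows (by \eqref{initeq}) and is controlled by $\|T_i\|\cdot\|x_i\| = \|T_i\|\cdot(\|X_i\|^2+\|Y_i\|^2)^{1/2}$ on all rows; combined with the least-squares bound $\|x_i\| \le \|T_i^\dagger\|\cdot\|\mathcal{E}^u_i\|$ this gives $\|(W+E_i)X_i+Y_i(W+E_i)\| \le \kappa(T_i)\,\|\mathcal{E}^u_i\|$, i.e.\ a term of size at most $\beta\sqrt{(n+m)/2}\cdot\|\mathcal{E}^u_i\|$ — actually one should be careful and extract a clean constant; using $\beta := \sup_i \sqrt{2/(n+m)}\,\kappa(T_i)$ the cleanest route is to write $\|(W+E_i)X_i+Y_i(W+E_i)\| \le \beta\,\|\mathcal{E}^u_i\|$ up to the normalization already built into the $\sqrt{2(n+m)}$ factors of \eqref{bound1}. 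The second (quadratic) term $\|Y_i(W+E_i)X_i\|$ is exactly what was bounded in \eqref{alpbo1}--\eqref{alpbo2} by $\alpha\|\mathcal{E}^u_i\|^2$, which for $\alpha\varepsilon<1$ is dominated by the linear term and can be absorbed.

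Next I would assemble the telescoping estimate
\[
\|\mathcal{E}^s\| = \Big\|\sum_{i\ge 1}\Delta_i\Big\| \le \|\mathcal{E}^s_1\| + \sum_{i\ge 1}\left(\beta\,\|\mathcal{E}^u_i\| + \alpha\,\|\mathcal{E}^u_i\|^2\right),
\]
noting $\|\mathcal{E}^s_1\| \le \|\mathcal{E}_1\| = \varepsilon$ and $\|\mathcal{E}^u_1\| \le \varepsilon$. Then I invoke the convergence estimate \eqref{alpbo3}, $\|\mathcal{E}^u_k\| \le \alpha^{2^{k-1}-1}\|\mathcal{E}^u_1\|^{2^{k-1}} \le (\alpha\varepsilon)^{2^{k-1}}/\alpha$, and since $2^{k-1}\ge k$ for $k\ge 1$, crudely $\|\mathcal{E}^u_k\| \le (\alpha\varepsilon)^{k}/\alpha \le \varepsilon(\alpha\varepsilon)^{k-1}$. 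Summing the geometric series, $\sum_{i\ge 1}\|\mathcal{E}^u_i\| \le \varepsilon/(1-\alpha\varepsilon)$, and similarly $\sum_{i\ge1}\|\mathcal{E}^u_i\|^2 \le \varepsilon^2/(1-(\alpha\varepsilon)^2) \le \varepsilon^2/(1-\alpha\varepsilon)$, which is at most $\varepsilon/(1-\alpha\varepsilon)$ since $\alpha\varepsilon<1$ forces $\varepsilon \le 1/\alpha$ — hmm, that last step needs $\alpha\varepsilon\le 1$, fine, but to keep the stated constant clean I would fold the quadratic contributions together with the $\beta$ term, giving $\|\mathcal{E}^s\| \le \varepsilon + \beta\varepsilon/(1-\alpha\varepsilon) + (\text{lower order}) \le \varepsilon(1+\beta)/(1-\alpha\varepsilon)$, matching the claim (the factor $1/(1-\alpha\varepsilon)$ on the leading $\varepsilon$ as well, which is legitimate since $1/(1-\alpha\varepsilon)\ge 1$).

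The main obstacle I anticipate is bookkeeping the constants so that the final bound comes out exactly as $\varepsilon(1+\beta)/(1-\alpha\varepsilon)$ rather than with spurious factors of $\sqrt{2}$, $\sqrt{n+m}$, or an extra additive $\alpha\varepsilon^2$ term. This requires being precise about which norm (the combined pencil norm $\|\mathcal{E}\| = (\|E\|^2+\|\widetilde{E}\|^2)^{1/2}$ versus the individual block norms) appears at each stage, and about how the cross-term $(W+E_i)X_i+Y_i(W+E_i)$ relates to $\kappa(T_i)\|\mathcal{E}^u_i\|$ — in particular using that its unstructured rows are literally $-\mathcal{E}^u_i$ while the structured rows are bounded via $\|T_i\|\|x_i\|$ with the $\sqrt{2(n+m)}$ normalization of Lemma~\ref{nplemma} converting $\|T_i\|$ into the denominator of $\beta$. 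A secondary technical point is justifying the rearrangement $\sum\Delta_i$ (absolute convergence), which follows immediately from \eqref{alpbo3} and $\alpha\varepsilon<1$. Once the per-step increment is pinned down as $\le \beta\|\mathcal{E}^u_i\|$ plus a quadratic term, the rest is a routine geometric-series summation.
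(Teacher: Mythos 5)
Your proposal follows essentially the same route as the paper's proof: accumulate the structured parts over the iterations, bound the Sylvester-solution contribution at step $i$ by $\beta\,\|{\cal E}_i^u\|$ (obtained, exactly as your parenthetical anticipates, from the $\|X_i\|^2+\|Y_i\|^2$ estimate of Lemma \ref{nplemma} together with the normalization $\|T_i\|_F^2=(n+m)\bigl(\|W+E_i\|^2+\|\widetilde{W}+\widetilde{E}_i\|^2\bigr)$, which is what produces the $\sqrt{2/(n+m)}$ factor in $\beta$) and the quadratic term by $\alpha\,\|{\cal E}_i^u\|^2$, then sum using the decay \eqref{alpbo3}. Your only deviation is the cruder majorization $(\alpha\varepsilon)^{2^{i-1}}\le(\alpha\varepsilon)^{i}$ with separate geometric summation of the linear and quadratic contributions, which, as you checked, still folds into $\varepsilon(1+\beta)/(1-\alpha\varepsilon)$; the paper instead pairs each quadratic term with the next step's linear term to obtain $\varepsilon(1+\beta)\sum_{i\ge 0}(\alpha\varepsilon)^{2^i-1}\le \varepsilon(1+\beta)/(1-\alpha\varepsilon)$.
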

\begin{proof}
For the input ${\cal C}^1_{P(\lambda)}+{\cal E}_1$, following Algorithm \ref{alg} step-by-step, we can build the resulting structured perturbation as follows:  
\begin{equation*}
\begin{aligned}
{\cal C}^1_{P(\lambda)}+{\cal E}^s &= {\cal C}^1_{P(\lambda)}+{\cal E}_1^s + \left(({\cal C}^1_{P(\lambda)}+{\cal E}_1)X_1 + Y_1({\cal C}^1_{P(\lambda)}+{\cal E}_1) \right)^s + \left(X_1({\cal C}^1_{P(\lambda)}+{\cal E}_1)Y_1\right)^s\\
&+ \left(({\cal C}^1_{P(\lambda)}+{\cal E}_2)X_2 + Y_2({\cal C}^1_{P(\lambda)}+{\cal E}_2) \right)^s  + \left(X_2({\cal C}^1_{P(\lambda)}+{\cal E}_2)Y_2\right)^s + \ldots \\ 
\ldots &+ \left(({\cal C}^1_{P(\lambda)}+{\cal E}_i)X_i + Y_i({\cal C}^1_{P(\lambda)}+{\cal E}_i) \right)^s   + \left(X_{i}({\cal C}^1_{P(\lambda)}+{\cal E}_{i})Y_{i}\right)^s + \dots \\ 
\end{aligned}
\end{equation*}
%
%

We start by evaluating the structured part of the perturbation coming from the coupled Sylvester equations: 
\small{
\begin{equation}
\label{evpart}
\begin{aligned}
&||\left(({\cal C}^1_{P(\lambda)}+{\cal E}_i)X_i + Y_i({\cal C}^1_{P(\lambda)}+{\cal E}_i) \right)^s|| 
\\ 
&\leq \sqrt{ \frac{2\kappa(T_i)^2 ||W+E_i||^2}{\left(n+m\right) \left(||W+E_i||^2 + ||\widetilde{W}+\widetilde{E}_i||^2\right)}
+ 
\frac{2\kappa(T_i)^2 ||\widetilde{W}+\widetilde{E}_i||^2}{\left(n+m\right) \left(||W+E_i||^2 + ||\widetilde{W}+\widetilde{E}_i||^2\right)} } || {\cal E}_i^u || \\
&= \frac{\sqrt{2}\kappa(T_i) \sqrt{||W+E_i||^2 + ||\widetilde{W}+\widetilde{E}_i||^2 } }{\sqrt{\left(n+m\right) \left(||W+E_i||^2 + ||\widetilde{W}+\widetilde{E}_i||^2 \right)}} \ || {\cal E}_i^u ||
\leq \ \beta \ || {\cal E}_i^u ||.
\end{aligned}
\end{equation}}
\noindent Recall that our initial perturbation is such that $\kappa(T_i)$ does not change much and thus the supremum in the definition of $\beta$ is finite. 
Note that the bounds for $||{\cal E}^u_i||$ are also bounds for $||X_{i}({\cal C}^1_{P(\lambda)}+{\cal E}_{i})Y_{i}||$, see \eqref{unbound}, and thus also for $||\left(X_{i}({\cal C}^1_{P(\lambda)}+{\cal E}_{i})Y_{i}\right)^s||$. Thus we can evaluate the norm of $||{\cal E}^s_i||$ using \eqref{alpbo3} and \eqref{evpart} as well as noting that $||{\cal E}^s_i||$ and $||{\cal E}^u_i||$ are less than or equal to $||{\cal E}_i||$:
\begin{equation}
\label{summation}
\begin{aligned}
||{\cal E}^s|| = ||{\cal E}_1^s|| &+ 
 ||\left(({\cal C}^1_{P(\lambda)}+{\cal E}_1)X_1 + Y_1({\cal C}^1_{P(\lambda)}+{\cal E}_1) \right)^s|| + 
||\left(X_1({\cal C}^1_{P(\lambda)}+{\cal E}_1)Y_1\right)^s|| + \\
&+ ||\left(({\cal C}^1_{P(\lambda)}+{\cal E}_2)X_2 + Y_2({\cal C}^1_{P(\lambda)}+{\cal E}_2) \right)^s|| + 
||\left(X_2({\cal C}^1_{P(\lambda)}+{\cal E}_2)Y_2\right)^s|| + \\
\ldots &+ ||\left(({\cal C}^1_{P(\lambda)}+{\cal E}_i)X_i + Y_i({\cal C}^1_{P(\lambda)}+{\cal E}_i) \right)^s|| + 
||\left(X_{i}({\cal C}^1_{P(\lambda)}+{\cal E}_{i})Y_{i}\right)^s|| + \dots \\
&= \varepsilon 
+ {\beta} ||{\cal E}_1^u|| + \alpha ||{\cal E}_1^u||^2 
+ {\beta} ||{\cal E}_2^u||  + \alpha ||{\cal E}_2^u||^2 + 
 \ldots + {\beta} ||{\cal E}_i^u|| + \alpha ||{\cal E}_i^u||^2 + \dots  \\
&= \varepsilon 
+ {\beta} \varepsilon + \alpha \varepsilon^2 
+ {\beta} \alpha \varepsilon^2  + \alpha^3 \varepsilon^4 
+ {\beta} \alpha^3 \varepsilon^4  + \alpha^7 \varepsilon^8 + \\
 \ldots & + {\beta} \alpha^{2^{i-1}-1} \varepsilon^{2^{i-1}} + \alpha^{2^{i}-1}\varepsilon^{2^{i}} + \dots  \\
&=   \varepsilon (1 + {\beta}) \left( 1 + \alpha \varepsilon + (\alpha \varepsilon)^3 + \ldots + (\alpha \varepsilon)^{2^{i-1}-1} + \dots  \right)  \\
&=  \varepsilon (1 + {\beta}) \left( \sum_{i=0}^{\infty}(\alpha \varepsilon)^{2^i-1} \right) \leq  \frac{ \varepsilon (1 + {\beta}) }{1 - \alpha \varepsilon}.\\
\end{aligned}
\end{equation}
\end{proof}
The bound in Theorem \ref{thbounds} is not very tight if $\alpha \varepsilon$ is close to 1 but it is quite good for small $\alpha \varepsilon$. For example, Theorem \ref{thbounds} says that for $\alpha \varepsilon < 1/n$ we get $||{\cal E}^s|| \leq n \varepsilon (1 + {\beta}) / (n-1)$, and in particular, for $\alpha \varepsilon < 1/2$ we get $||{\cal E}^s|| \leq 2 (1 + {\beta}) \varepsilon$.
\hide{
\begin{theorem} 
Let $(W,\widetilde{W})\in({\mathbb
C}^{\,\hat{n}\times \hat{n}},{\mathbb
C}^{\,\hat{n}\times \hat{n}})$ and
let $\varepsilon \in \mathbb{R}, \alpha:=\|W\| , \beta:=\|\widetilde{W}\|$  and $\gamma$ is defined in \eqref{gamma}.
For each pair of ${\hat{n}}$-by-${\hat{n}}$ matrices $(E,\widetilde{E})$ satisfying
\begin{equation} \label{15}
\|E\|<\varepsilon,\qquad \|\widetilde{E}\|<\varepsilon,
\end{equation}
there exist matrices $U=I_{\hat{n}}+P$ and $V=I_{\hat{n}}+Q$ 
depending holomorphically on the
entries of $(E,\widetilde{E})$ in a neighborhood of
zero such that
\[U(W+E,\widetilde{W}+\widetilde{E})V=(W+F,\widetilde{W}+\widetilde{F}), \ \ (F,\widetilde{F}) \in {\cal D}({\mathbb C}), \ \|  F \|<\varepsilon, \text{and} \ \| \widetilde{F} \|<\varepsilon, \] 
where ${\mathbb C}^{\,\hat{n}\times \hat{n}}\times{\mathbb C}^{\,\hat{n}\times \hat{n}}=T_{(W,\widetilde{W})} + {\cal D}({\mathbb C})$.
\end{theorem}
\begin{proof}
First, note that if $W=0$ and $\widetilde{W}=0$ then $U=I_{\hat{n}}$ and $V=I_{\hat{n}}$.

We construct
$U=I_{\hat{n}}+P$.  If
$E=\sum_{i,j}
m_{ij}E_{ij}$ and $\widetilde{E}=\sum_{i,j}
n_{ij}E_{ij}$ (i.e.,
 $E=[m_{ij}]$ and $\widetilde{E}=[n_{ij}]$),
then we can chose $P_{ij}$, $P'_{ij}$, $Q_{ij}$, and $Q'_{ij}$ \eqref{8}, such that 
\begin{multline*}
\sum_{i,j}
(m_{ij}E_{ij},n_{ij}E_{ij})+\sum_{i,j}
(m_{ij}P_{ij}^{T}+n_{ij}P_{ij}^{'T})(W+E,\widetilde{W}+\widetilde{E}) \\ +
(W+E,\widetilde{W}+\widetilde{E})\sum_{i,j}(m_{ij}Q_{ij}+n_{ij}Q'_{ij}) \in
{\cal D}({\mathbb C})
\end{multline*}
and for
\[
P:=\sum_{i,j}
(m_{ij}P_{ij}+n_{ij}P'_{ij}) \text{ and } Q:=\sum_{i,j}
(m_{ij}Q_{ij}+n_{ij}Q'_{ij})
\]
we have
\begin{equation*}\label{18}
(E,\widetilde{E})+P(W+E,\widetilde{W}+\widetilde{E})+(W+E,\widetilde{W}+\widetilde{E})Q\in
{\cal D}({\mathbb C}).
\end{equation*}

If
$(i,j)\notin \Ind_1({\cal D})$ (or, respectively, $(i,j)\notin \Ind_2({\cal D})$), then
$|m_{ij}|<\varepsilon^{2}$ (or, respectively, $|n_{ij}|<\varepsilon^{2}$)
by \eqref{15}.
We obtain
\begin{align*}
\|P\|&\le
\sum_{(i,j)\notin \Ind_1({\cal D})}
|m_{ij}|\|P_{ij}\|+ \sum_{(i,j)\notin{ \Ind_2({\cal D})}} |n_{ij}|\|P^{'}_{ij}\| \\
&<
\sum_{(i,j)\notin \Ind_1({\cal D})}
\varepsilon\|P_{ij}\|+ \sum_{(i,j)\notin{ \Ind_2({\cal D})}} \varepsilon \|P^{'}_{ij}\|=
\varepsilon \gamma_1, \\
\|Q\|&\le
\sum_{(i,j)\notin \Ind_1({\cal D})}
|m_{ij}|\|Q_{ij}\|+ \sum_{(i,j)\notin{ \Ind_2({\cal D})}} |n_{ij}|\|Q^{'}_{ij}\| \\
&<
\sum_{(i,j)\notin \Ind_1({\cal D})}
\varepsilon \|Q_{ij}\|+ \sum_{(i,j)\notin{ \Ind_2({\cal D})}} \varepsilon \|Q^{'}_{ij}\|=
\varepsilon \gamma_2.
\end{align*}
Put
\[
U(W+E,\widetilde{W}+\widetilde{E})V=(W+F,\widetilde{W}+\widetilde{F})\quad \text{where }
U:=I_{\hat{n}}+P, V:=I_{\hat{n}}+Q,
\]
then
\begin{multline*}\label{18de}
(F,\widetilde{F})=(E,\widetilde{E})+P(W+E,\widetilde{W}+\widetilde{E})+(W+E,\widetilde{W}+\widetilde{E})Q \\
+P(W+E,\widetilde{W}+\widetilde{E})Q.
\end{multline*}
Summing up, we obtain 
\begin{align*}
\|F\|
&\le\|E\|+\|P\|(\|W\|+\|E\|)+(\|W\|+\|E\|)\|Q\|
+\|P\|(\|W\|+\|E\|)\|Q\|
 \\&<
\varepsilon+ \varepsilon
(\gamma_1 + \gamma_2 )(\alpha+\varepsilon)+
\varepsilon^{2} \gamma_1\gamma_2 (\alpha+\varepsilon)
=\varepsilon+\varepsilon (\alpha+\varepsilon)(\varepsilon^{2} \gamma_1\gamma_2 + \gamma_1 + \gamma_2)
 \\&<
\varepsilon(1+(\alpha+\varepsilon)(\varepsilon \gamma_1\gamma_2 + \gamma_1 + \gamma_2)) < C \varepsilon,\\
\|\widetilde{F}\|
&\le\|\widetilde{E}\|+\|P\|(\|\widetilde{W}\|+\|\widetilde{E}\|)+(\|\widetilde{W}\|+\|\widetilde{E}\|)\|Q\|
+\|P\|(\|\widetilde{W}\|+\|\widetilde{E}\|)\|Q\|
 \\&<
\varepsilon+ \varepsilon
(\gamma_1 + \gamma_2 )(\beta+\varepsilon)+
\varepsilon^{2} \gamma_1\gamma_2 (\beta+\varepsilon)
=\varepsilon+\varepsilon (\beta+\varepsilon)(\varepsilon^{2} \gamma_1\gamma_2 + \gamma_1 + \gamma_2)
 \\&<
\varepsilon(1+(\beta+\varepsilon)(\varepsilon \gamma_1\gamma_2 + \gamma_1 + \gamma_2)) < C \varepsilon.
\end{align*}
\end{proof}
}

\subsection{Construction of the transformation matrices}
\label{sectr}
In this section we investigate the transformation matrices that bring a full perturbation of the linearization to a structured perturbation of the linearization. Following Algorithm \ref{alg}, we observe that the transformation matrices are constructed as the following infinite products: 
\begin{equation*}
U=\lim_{i \to \infty} (I_{m}+X_i) \cdots (I_{m}+X_2)(I_{m}+X_1) \text{ and } 
V=\lim_{i \to \infty} (I_{n}+Y_1)(I_{n}+Y_2) \cdots  (I_{n}+Y_i). 
\end{equation*}
Convergence of these infinite products to nonsingular matrices is proven in Theorem \ref{trmat}. Note that, for the small initial perturbations such transformation matrices are small perturbations of the identity matrices. 
\begin{theorem}
\label{trmat}
Let ${\cal C}^1_{P(\lambda)} + {\cal E}_{1}$ be a perturbation of the linearization ${\cal C}^1_{P(\lambda)}$, and $\alpha ||{\cal E}_1|| < 1$, where $\alpha = \alpha ({\cal C}^1_{P(\lambda)},{\cal E}_{1})$ 
is defined in \eqref{aldef}. Let also $X_i$ and $Y_i$ be a solution of \eqref{initeq} for the corresponding index $i$, and $I_m$ and $I_n$ be the $m \times m$ and $n \times n$ identity matrices. Then 
\begin{equation}\label{30z}
\lim_{i \to \infty} (I_{m}+X_i) \cdots (I_{m}+X_2)(I_{m}+X_1) \text{ and } 
\lim_{i \to \infty} (I_{n}+Y_1)(I_{n}+Y_2) \cdots  (I_{n}+Y_i)
\end{equation}
exist and are nonsingular matrices. 
\end{theorem}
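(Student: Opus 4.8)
The plan is to show that each of the two infinite products converges to an invertible matrix by bounding the norms of the correction matrices $X_i$ and $Y_i$ and invoking the standard criterion for convergence of infinite products of the form $\prod (I + Z_i)$ when $\sum \|Z_i\| < \infty$. First I would recall from Lemma~\ref{nplemma} and the estimates \eqref{unbound}--\eqref{alpbo3} established in the previous subsections that $\|X_i\| \cdot \|Y_i\|$ is controlled by $\|{\cal E}_i^u\|^2$, and more precisely that the individual norms satisfy $\|X_i\|, \|Y_i\| \le \kappa(T_i)\,\|{\cal E}_i^u\| / \sqrt{(n+m)(\|W+E_i\|^2 + \|\widetilde W + \widetilde E_i\|^2)}$ multiplied by $\|W+E_i\|$ or $\|\widetilde W + \widetilde E_i\|$ respectively (this comes directly from \eqref{bound1} applied to the reduced system). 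Hence, using the definition of $\beta$ in Theorem~\ref{thbounds}, one obtains $\|X_i\| \le \beta\,\|{\cal E}_i^u\|$ and $\|Y_i\| \le \beta\,\|{\cal E}_i^u\|$ (up to a harmless constant factor), and then by \eqref{alpbo3} we have $\|{\cal E}_i^u\| \le \alpha^{2^{i-1}-1}\varepsilon^{2^{i-1}} = (1/\alpha)(\alpha\varepsilon)^{2^{i-1}}$, where $\varepsilon = \|{\cal E}_1\|$.

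Next I would observe that under the hypothesis $\alpha\varepsilon < 1$ the series $\sum_{i=1}^\infty \|X_i\|$ and $\sum_{i=1}^\infty \|Y_i\|$ converge, since they are dominated by the geometric-type series $\frac{\beta}{\alpha}\sum_{i=1}^\infty (\alpha\varepsilon)^{2^{i-1}}$, which converges (indeed extremely fast, doubly exponentially). With absolute summability of the corrections in hand, I would then apply the classical fact that if $\sum_i \|Z_i\| < \infty$ then the partial products $\prod_{i=1}^{k}(I + Z_i)$ form a Cauchy sequence in the space of matrices with the operator (or Frobenius) norm and hence converge to a limit matrix; this is a routine estimate using $\|\prod_{i=1}^{k+p}(I+Z_i) - \prod_{i=1}^{k}(I+Z_i)\| \le \left(\prod_{i=1}^{\infty}(1+\|Z_i\|)\right)\left(\prod_{i=k+1}^{k+p}(1+\|Z_i\|) - 1\right) \to 0$ as $k \to \infty$. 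Applying this with $Z_i = X_i$ (in the appropriate order) and with $Z_i = Y_i$ gives existence of both limits in \eqref{30z}.

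Finally, for nonsingularity I would argue via the determinant: $\det\!\left(\prod_{i=1}^{k}(I+X_i)\right) = \prod_{i=1}^{k}\det(I+X_i)$, and since $\|X_i\| \to 0$ fast enough that $\sum_i \|X_i\| < \infty$, the infinite product $\prod_{i=1}^\infty \det(I+X_i)$ converges to a nonzero value (each factor satisfies $|\det(I+X_i) - 1| \le (1+\|X_i\|)^{m} - 1$, which is summable, so the product of determinants converges and, because $\alpha\varepsilon$ is small, none of the factors vanishes and the limit is bounded away from $0$). By continuity of the determinant the limit matrix $U$ (respectively $V$) has $\det U = \prod_{i=1}^\infty \det(I+X_i) \ne 0$, hence is invertible; alternatively, since $\|U - I\| \le \prod_{i=1}^\infty(1+\|X_i\|) - 1$ can be made less than $1$ when the initial perturbation is small, $U$ is invertible by a Neumann series argument, and the same reasoning applies to $V$.

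The main obstacle is getting the per-step bounds on $\|X_i\|$ and $\|Y_i\|$ individually (not just on the product $\|X_i\|\cdot\|Y_i\|$ that Lemma~\ref{nplemma} directly yields) in a form clean enough to sum; this requires going back to the least-squares bound \eqref{bound1} rather than the product bound \eqref{probbound}, and keeping track of the fact that we solve the \emph{reduced} system (some rows of $T_i$ removed), so the relevant bound is $\|x\| \le \|T^\dagger\|\cdot\|b\|$ with $b$ having norm $\|{\cal E}_i^u\|$ — exactly the content of the Remark following the convergence theorem. Once those bounds are in place in terms of $\alpha$, $\beta$, and $\|{\cal E}_i^u\|$, the doubly-exponential decay from \eqref{alpbo3} makes convergence and invertibility essentially immediate.
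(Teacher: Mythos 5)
Your proposal is correct and follows essentially the same route as the paper: bound $\|X_i\|$ and $\|Y_i\|$ individually via the least-squares estimate \eqref{bound1}, use the doubly exponential decay of $\|{\cal E}_i^u\|$ from \eqref{alpbo3} (valid under $\alpha\|{\cal E}_1\|<1$) to conclude that $\sum_i\|X_i\|$ and $\sum_i\|Y_i\|$ converge, and then deduce convergence of the infinite products to nonsingular limits. The only difference is cosmetic: the paper concludes by citing \cite[Theorem 4]{Tren99} for invertible convergence of $\prod(I+Z_i)$ under absolute summability, whereas you prove that classical fact directly with a Cauchy-product and determinant/Neumann argument, and you use the linear bound $\|X_i\|\le\beta\|{\cal E}_i^u\|$ in place of the paper's $\alpha\|{\cal E}_i^u\|^2$; both suffice for summability.
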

\begin{proof}
\hide{
For each natural number $l$, put
\begin{equation}\label{28z}
U_{i+1}:= (I_{n}+X_i) \cdots (I_{n}+X_2)(I_{n}+X_1).
\end{equation}
Analogously for the second matrix
\begin{equation}\label{29z}
V_{i+1}:= (I_{n}+Y_1)(I_{n}+Y_2) \cdots  (I_{n}+Y_i).
\end{equation}

Now let $i \rightarrow \infty$ then
\begin{equation}\label{30z}
\lim_{i \rightarrow \infty} U_{i+1}:= \cdots (I_{n}+X_i) \cdots (I_{n}+X_2)(I_{n}+X_1), \text{ and } 
\lim_{i \rightarrow \infty} V_{i+1}=\prod_{i=1}^{\infty}(I_{n}+Y_i).
\end{equation}
}
By \cite[Theorem 4]{Tren99} the limits in \eqref{30z} exist and are nonsingular matrices 
if the sums 
\begin{equation}
\label{sums}
\|X_1\|+\|X_2\|+\|X_3\|+\cdots= \sum_{i=1}^{\infty} \|X_i\| \quad \text{ and }  \quad 
\|Y_1\|+\|Y_2\|+\|Y_3\|+\cdots= \sum_{i=1}^{\infty} \|Y_i\|, 
\end{equation}
respectively, absolutely converge. 

Using the bound \eqref{bound1} for a solution of coupled Sylvester equations and noting that $||X||^2 \leq ||X||^2 + ||Y||^2$, we have the following bound for each $||X_{i}||^2$ and $||Y_{i}||^2$: 
\begin{equation}
\begin{aligned}
\label{XYb}
||X_{i}||  &\leq \frac{\kappa(T_i)^2 }{\left(n+m\right) \left(||W+E_i||^2 + ||\widetilde{W}+\widetilde{E}_i||^2\right)} ||{\cal E}^u_i||^2 \leq 
\alpha ||{\cal E}^u_i||^2  \leq \alpha^{2^{i-1}-1} ||{\cal E}_1^u||^{2^{i-1}}, \\ 
||Y_{i}||  &\leq \frac{\kappa(T_i)^2 }{\left(n+m\right) \left(||W+E_i||^2 + ||\widetilde{W}+\widetilde{E}_i||^2\right)} ||{\cal E}^u_i||^2 \leq
\alpha ||{\cal E}^u_i||^2 \leq \alpha^{2^{i-1}-1} ||{\cal E}_1^u||^{2^{i-1}}.
\end{aligned}
\end{equation}
Bounds \eqref{XYb} allow us to use \eqref{summation}, and conclude that the sums in \eqref{sums} absolutely converge for $\alpha ||{\cal E}_1|| < 1, \ (||{\cal E}_1^u|| \leq ||{\cal E}_1||)$. 
\end{proof}

\hide{
\begin{lemma}\label{lem3c}
Let $A\in{\mathbb C}^{\,n\times n}$ and
let $\cal D$ be a $(0,\!*)$ matrix of
size $n\times n$ satisfying
\eqref{a4nf}. Let $m$ be the natural
number from Lemma \ref{lem1}, we
suggest that $m\ge 3$,
and let $M$ be any $n$-by-$n$ matrix
$M$ satisfying $\|M\|<m^{-8}$. Then
there exists a matrix $S=I_n+X$
depending holomorphically on the
entries of $M$ in a neighborhood of
zero such that
\[S^{T}(A+M)S-A\in {\cal
D}({\mathbb C})\] and
$S=I_n$ if $M=0$.
\end{lemma}

absolutely converges due to \eqref{5aa}
and \eqref{24z} thus by \cite[Theorem 4]{inf} the
product \eqref{28z} converges to some invertible matrix $S$.
The entries of $S$ are holomorphic functions in
the entries of $M$ (that satisfies
$\|M\|<m^{-8}$).
}

\section{Numerical experiments}
\label{numexp}
All the numerical experiments are performed on MacBook Pro (processor: 2,6 GHz Intel Core i7, memory: 32 GB 2400 MHz DDR4), using Matlab R2019a (64-bit). We consider a large number of randomly generated matrix polynomials, matrix polynomials coming from real world applications, and specially crafted matrix polynomials for testing the limits of the proposed algorithm.    

\begin{example}
\label{ex1}
Consider 1000 random polynomials of the size $3 \times 3$ and degree $5$. The entries of the matrix coefficients of these polynomials are generated from the normal distribution with the mean $\mu = 0$ and standard deviation $\sigma=10$ (variance $\sigma^2=100$). The polynomials are normalized to have the Frobenius norm equal to $1$. Each polynomial is perturbed by adding a matrix polynomial whose matrix coefficients have entries that are uniformly distributed numbers on the interval $(0,0.1)$. At most 6 iterations are needed for the norm of the unstructured part of a perturbation to be smaller than $10^{-16}$ ($10^{-16}$ is the tolerance we require). In Figure \ref{rex1flog} we present the results in whisker plots (box plots). 
\begin{figure}[h!]
  \centering
\begin{subfigure}[b]{0.48\textwidth}    
\begin{tikzpicture}
  \sbox0{\includegraphics[width=1.0\textwidth]{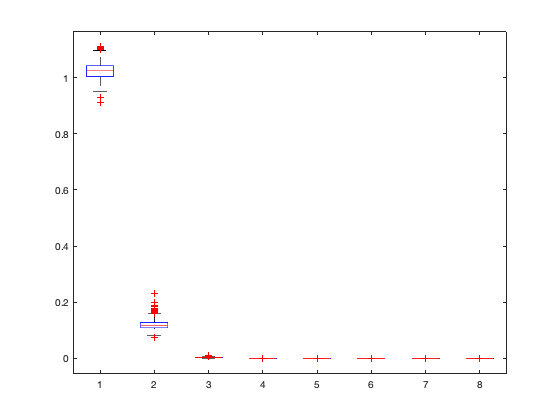}}
  \node[above right,inner sep=0pt] at (0,0)  {\usebox{0}};
  \node[black] at (0.5\wd0,0.05\ht0) {{\tiny\# iterations}};
  \node[black] at (0.02\wd0,0.5\ht0) {{\tiny$||{\cal E}^u||$}};
\end{tikzpicture}
 \caption{}
  \end{subfigure}
  \begin{subfigure}[b]{0.48\textwidth}    
    \begin{tikzpicture}
  \sbox0{\includegraphics[width=1.0\textwidth]{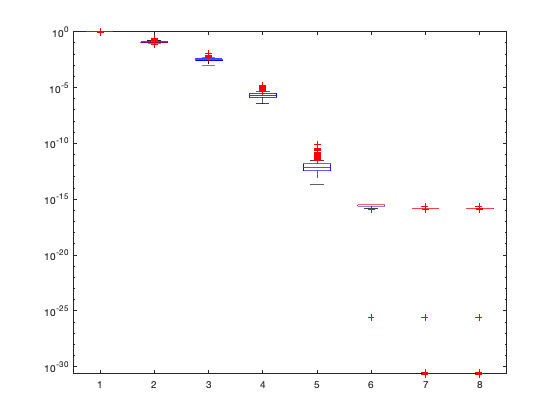}}
  \node[above right,inner sep=0pt] at (0,0)  {\usebox{0}};
  \node[black] at (0.5\wd0,0.05\ht0) {{\tiny\# iterations}};
  \node[black] at (-0.02\wd0,0.5\ht0) {{\tiny$\log ||{\cal E}^u||$}};
\end{tikzpicture}
  \caption{}
  \end{subfigure}
\caption{The whisker plots illustrates the elimination of the unstructured part of the perturbation 
for 1000 perturbed random polynomials of the size $3 \times 3$ and degree $5$. In (a) the Frobenius norms of unstructured parts of perturbations are plotted on the y-axis and the iterations are on the x-axis. In (b) the same data is presented in a whisker plot with a logarithmic scale on the y-axis. 
}
    \label{rex1flog}
\end{figure}
\end{example}

\begin{example}
Consider 1000 random polynomials of the size $8 \times 8$ and degree $4$. The entries of the matrix coefficients of these polynomials are generated from the normal distribution with the mean $\mu = 0$ and the standard deviation $\sigma=10$ (variance $\sigma^2=100$). These polynomials are normalized and perturbed as in Example \ref{ex1}. Once again at most 6 iterations are needed for the norm of the unstructured part of a perturbation to be of order $10^{-16}$. In Figure \ref{rex2flog} we present the results in whisker plots (box plots). 
\begin{figure}[h!]
  \centering
    \begin{subfigure}[b]{0.48\textwidth}    
\begin{tikzpicture}
  \sbox0{\includegraphics[width=1.0\textwidth]{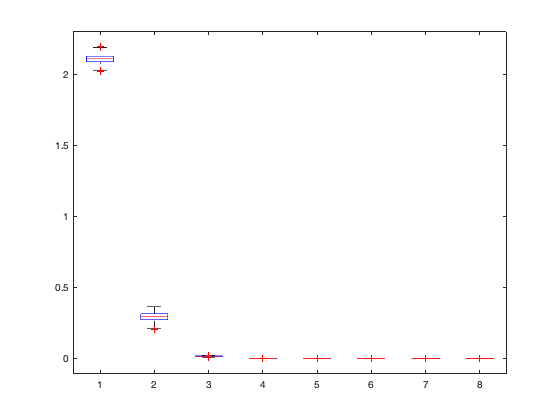}}
  \node[above right,inner sep=0pt] at (0,0)  {\usebox{0}};
  \node[black] at (0.5\wd0,0.05\ht0) {{\tiny\# iterations}};
  \node[black] at (0.02\wd0,0.5\ht0) {{\tiny$||{\cal E}^u||$}};
\end{tikzpicture}
 \caption{}
  \end{subfigure}
  \begin{subfigure}[b]{0.48\textwidth}    
    \begin{tikzpicture}
  \sbox0{\includegraphics[width=1.0\textwidth]{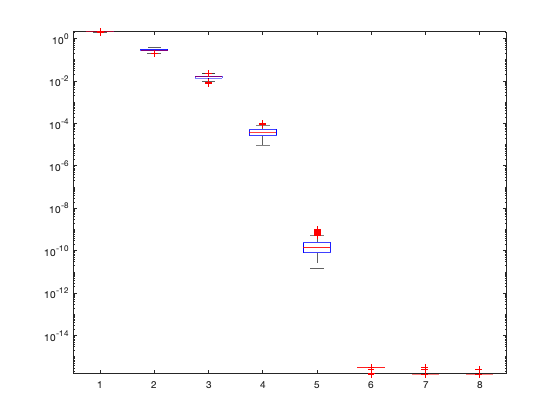}}
  \node[above right,inner sep=0pt] at (0,0)  {\usebox{0}};
  \node[black] at (0.5\wd0,0.05\ht0) {{\tiny\# iterations}};
  \node[black] at (-0.02\wd0,0.5\ht0) {{\tiny$\log ||{\cal E}^u||$}};
\end{tikzpicture}
  \caption{}
  \end{subfigure}
\caption{The whisker plots illustrates the elimination of the unstructured part of the perturbation 
for 1000 perturbed random polynomials of the size $8 \times 8$ and degree $4$. In (a) the Frobenius norms of unstructured parts of perturbations are plotted on the y-axis and the iterations are on the x-axis. In (b) the same data is presented in whisker plot with a logarithmic scale on the y-axis. 
}
    \label{rex2flog}
\end{figure}
\end{example}

In the following two examples we consider two quadratic matrix polynomials coming from applications. Both these matrix polynomials belong to the NLEVP-collection \cite{BHMS13}. 
\begin{example}
Consider the $5 \times 5$ quadratic matrix polynomial $Q(\lambda) = \lambda^2 M + \lambda D + K$ arising from modelling a two-dimensional three-link mobile manipulator \cite{BHMS13}. 
The $5 \times 5$ coefficient matrices are 
\begin{equation*}
M=\begin{bmatrix}M_0&0\\0&0\\\end{bmatrix}, \ 
D=\begin{bmatrix}D_0&0\\0&0\\\end{bmatrix}, \text{ and }
K=\begin{bmatrix}K_0&-F^T\\F&0\\\end{bmatrix},
\end{equation*}
with
\begin{equation*}
\begin{aligned}
M_0=\begin{bmatrix}18.7532&-7.94493 &7.94494\\-7.94493 &31.8182 &-26.8182\\7.94494 &-26.8182 &26.8182\\\end{bmatrix}, 
& \ 
D_0=\begin{bmatrix}-1.52143& -1.55168& 1.55168\\3.22064 &3.28467 & -3.28467\\-3.22064 &-3.28467 &3.28467\\\end{bmatrix}, \\
K_0=\begin{bmatrix}67.4894 &69.2393 &-69.2393\\69.8124 &1.68624&-1.68617\\-69.8123&-1.68617&-68.2707 \\\end{bmatrix}, 
& \ F_0=\begin{bmatrix}1&0&0\\0&0&1\\\end{bmatrix}.\\
\end{aligned}
\end{equation*}
In Figure \ref{rex3f} we present the decay of the norm of the unstructured part of the perturbation. The changes in the norm of the structured part of the perturbation and in the norms of the transformation matrices are presented in Figures \ref{rex3ff} and \ref{rex3fff}, respectively. 
\begin{figure}[h!]
  \centering
    \begin{subfigure}[b]{0.48\textwidth}    
    \begin{tikzpicture}
  \sbox0{\includegraphics[width=1.0\textwidth]{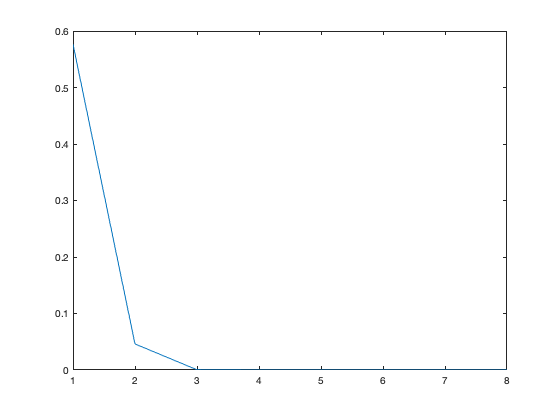}}
  \node[above right,inner sep=0pt] at (0,0)  {\usebox{0}};
  \node[black] at (0.5\wd0,0.05\ht0) {{\tiny\# iterations}};
  \node[black] at (0.02\wd0,0.5\ht0) {{\tiny$||{\cal E}^u||$}};
\end{tikzpicture}
 \caption{}
  \end{subfigure}
  \begin{subfigure}[b]{0.48\textwidth}    
    \begin{tikzpicture}
  \sbox0{\includegraphics[width=1.0\textwidth]{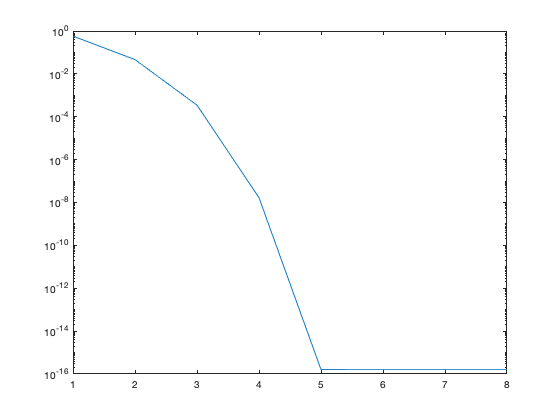}}
  \node[above right,inner sep=0pt] at (0,0)  {\usebox{0}};
  \node[black] at (0.5\wd0,0.05\ht0) {{\tiny\# iterations}};
  \node[black] at (-0.02\wd0,0.5\ht0) {{\tiny$\log ||{\cal E}^u||$}};
\end{tikzpicture}
  \caption{}
  \end{subfigure}
\caption{Changes of the norm of the unstructured part of a perturbation of $5 \times 5$ quadratic matrix polynomial $Q(\lambda) = \lambda^2 M + \lambda D + K$ arising from modelling a two-dimensional three-link mobile manipulator is plotted in (a). The same data but with a logarithmic scale on the y-axis is plotted in (b).}
\label{rex3f}
\end{figure}
\begin{figure}[h!]
  \centering
\begin{subfigure}[b]{0.48\textwidth}    
   \begin{tikzpicture}
  \sbox0{\includegraphics[width=1\textwidth]{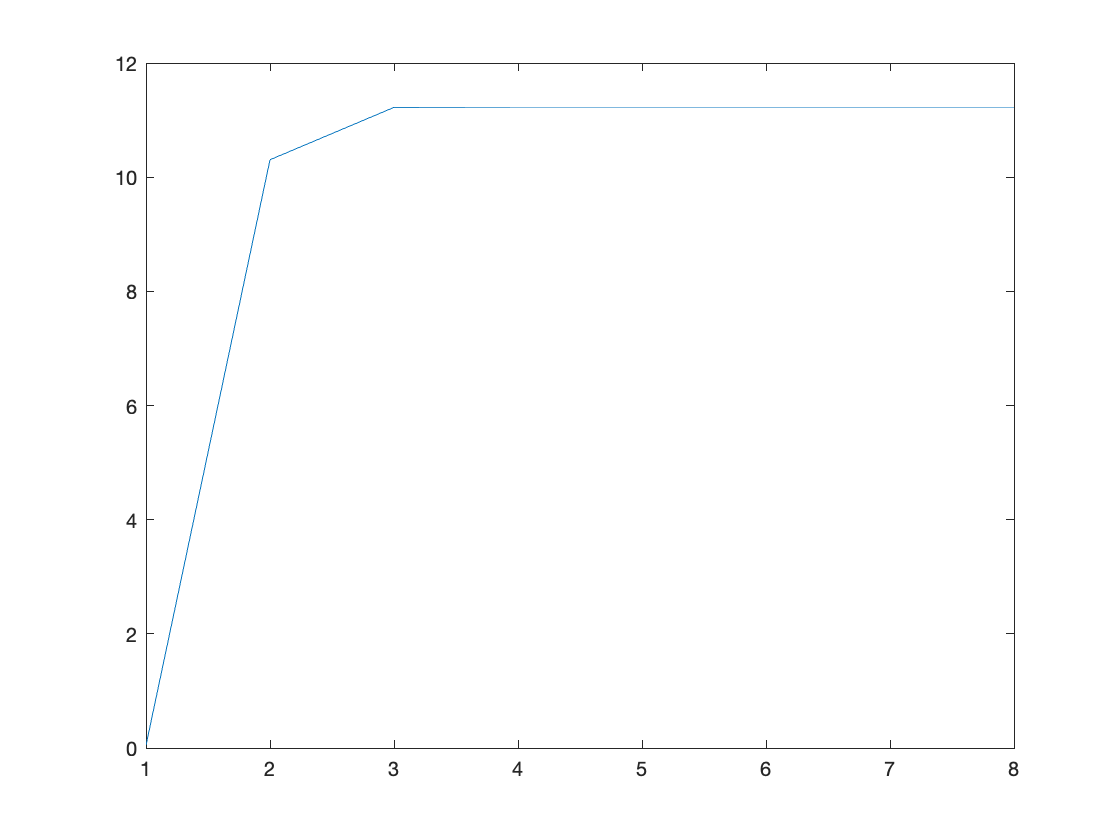}}
  \node[above right,inner sep=0pt] at (0,0)  {\usebox{0}};
  \node[black] at (0.5\wd0,0.05\ht0) {{\tiny\# iterations}};
  \node[black] at (0.02\wd0,0.5\ht0) {{\tiny$||{\cal E}^s||$}};
\end{tikzpicture}
  \caption{}
  \end{subfigure}
  \begin{subfigure}[b]{0.48\textwidth}    
  \begin{tikzpicture}
  \sbox0{\includegraphics[width=1.0\textwidth]{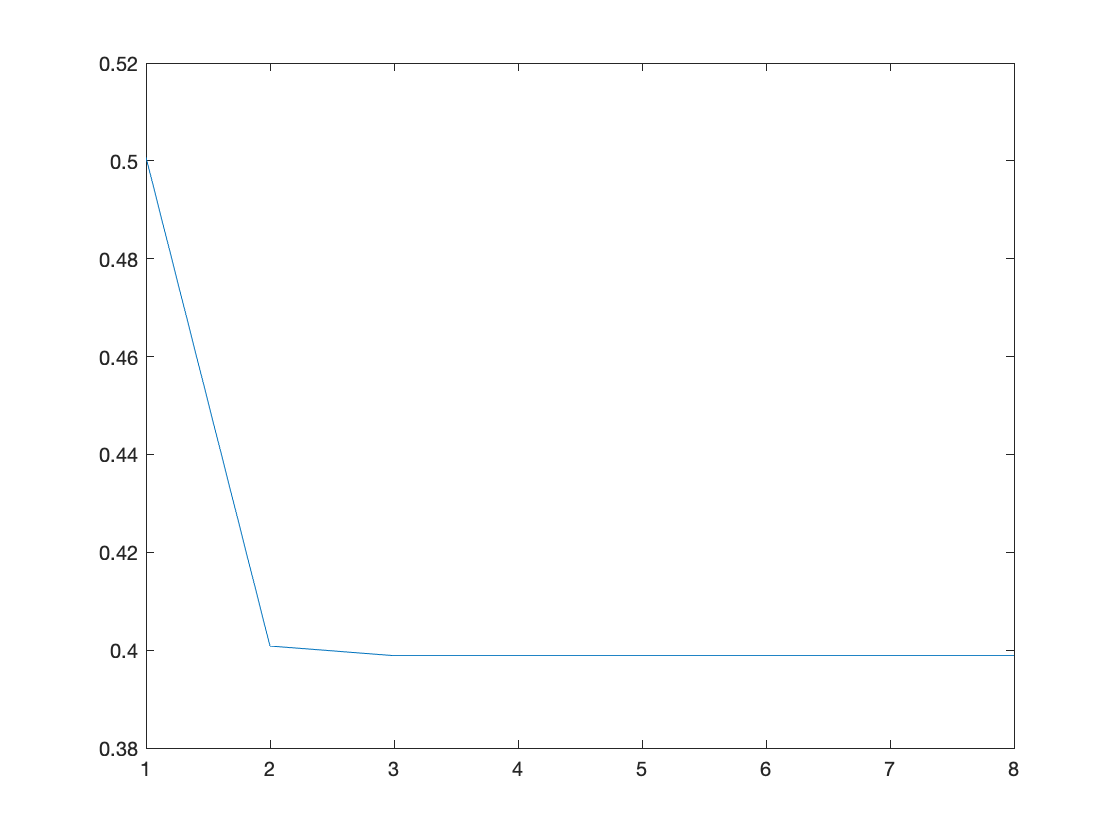}}
  \node[above right,inner sep=0pt] at (0,0)  {\usebox{0}};
  \node[black] at (0.5\wd0,0.05\ht0) {{\tiny\# iterations}};
  \node[black] at (0.02\wd0,0.5\ht0) {{\tiny$||{\cal E}^s||$}};
\end{tikzpicture}
  \caption{}
    \end{subfigure}
\caption{The changes of the norm of the structured part of a perturbation at  each iteration: (a) when we do not normalize the original matrix polynomial; (b) when we normalize the original matrix polynomial.
}
    \label{rex3ff}
\end{figure}
\begin{figure}[h!]
  \centering
  \begin{subfigure}[b]{0.48\textwidth}    
   \begin{tikzpicture}
  \sbox0{\includegraphics[width=1.0\textwidth]{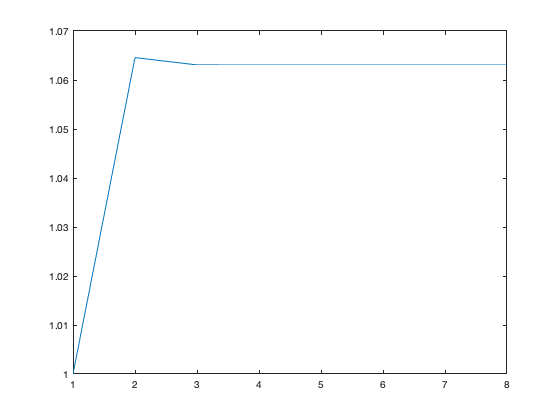}}
  \node[above right,inner sep=0pt] at (0,0)  {\usebox{0}};
  \node[black] at (0.5\wd0,0.05\ht0) {{\tiny\# iterations}};
  \node[black] at (0.02\wd0,0.5\ht0) {{\tiny$||U||_2$}};
\end{tikzpicture}
  \caption{}
  \end{subfigure}
  \begin{subfigure}[b]{0.48\textwidth}    
  \begin{tikzpicture}
  \sbox0{\includegraphics[width=1.0\textwidth]{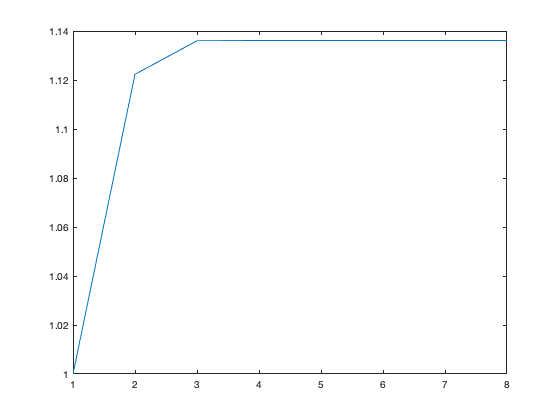}}
  \node[above right,inner sep=0pt] at (0,0)  {\usebox{0}};
  \node[black] at (0.5\wd0,0.05\ht0) {{\tiny\# iterations}};
  \node[black] at (0.02\wd0,0.5\ht0) {{\tiny$||V||_2$}};
\end{tikzpicture}
  \caption{}
  \end{subfigure}
\caption{The changes of the 2-norm of the transformation matrices $U$ and $V$ at  each iteration are plotted in (a) and (b), respectively. Recall that $U \cdot ({\cal C}^1_{P(\lambda)} + {\cal E}_1) \cdot V = {\cal C}^1_{P(\lambda)+ E(\lambda)}$. Note that, $||U||_2$ and $||V||_2$ are close to 1 ($||I||_2=1$).
}
    \label{rex3fff}
\end{figure}
\end{example}

\begin{example}
\label{ex4}
Consider a $21 \times 16$ quadratic matrix
polynomial 
arising from calibration of a surveillance camera
using a human body as a calibration target \cite{BHMS13,MiPa10}. Note that the polynomial is rectangular. 
In Figure \ref{rex4f} we present the decay of the norm of the unstructured part of the perturbation. The changes in the norm of the structured part of the perturbation and in the norms of the transformation matrices are presented in Figures \ref{rex4ff} and \ref{rex4fff}, respectively. 
\begin{figure}[h!]
  \centering
    \begin{subfigure}[b]{0.48\textwidth}    
  \begin{tikzpicture}
  \sbox0{\includegraphics[width=1.0\textwidth]{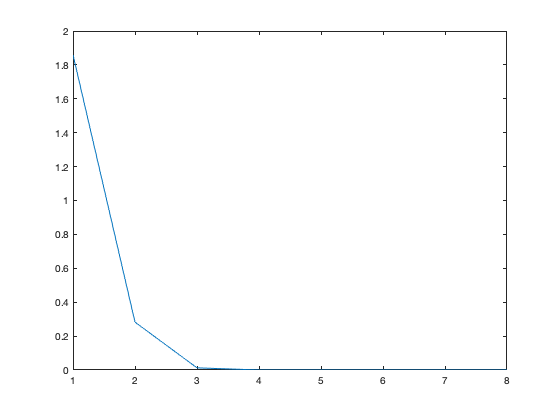}}
  \node[above right,inner sep=0pt] at (0,0)  {\usebox{0}};
  \node[black] at (0.5\wd0,0.05\ht0) {{\tiny\# iterations}};
  \node[black] at (0.02\wd0,0.5\ht0) {{\tiny$||{\cal E}^u||$}};
\end{tikzpicture}
 \caption{}
  \end{subfigure}
  \begin{subfigure}[b]{0.48\textwidth}    
  \begin{tikzpicture}
  \sbox0{\includegraphics[width=1.0\textwidth]{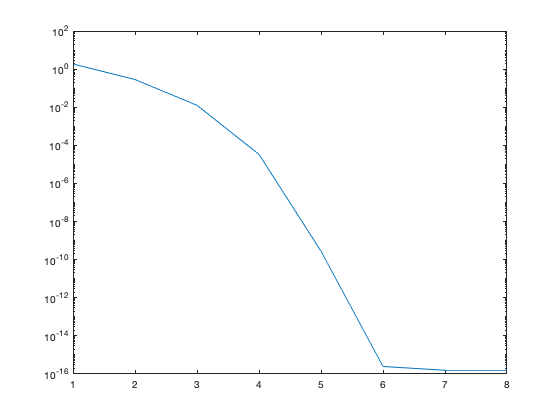}}
  \node[above right,inner sep=0pt] at (0,0)  {\usebox{0}};
  \node[black] at (0.5\wd0,0.05\ht0) {{\tiny\# iterations}};
  \node[black] at (-0.02\wd0,0.5\ht0) {{\tiny$\log ||{\cal E}^u||$}};
\end{tikzpicture}
  \caption{}
  \end{subfigure}
\caption{Changes of the norm of the unstructured part of a perturbation of $21 \times 16$ quadratic matrix polynomial 
arising from calibration of a surveillance camera is plotted in (a). The same convergence data but with a logarithmic scale on the y-axis is plotted in (b).}
\label{rex4f}
\end{figure}
\begin{figure}[h!]
  \centering
  \begin{subfigure}[b]{0.48\textwidth}    
   \begin{tikzpicture}
  \sbox0{\includegraphics[width=1.0\textwidth]{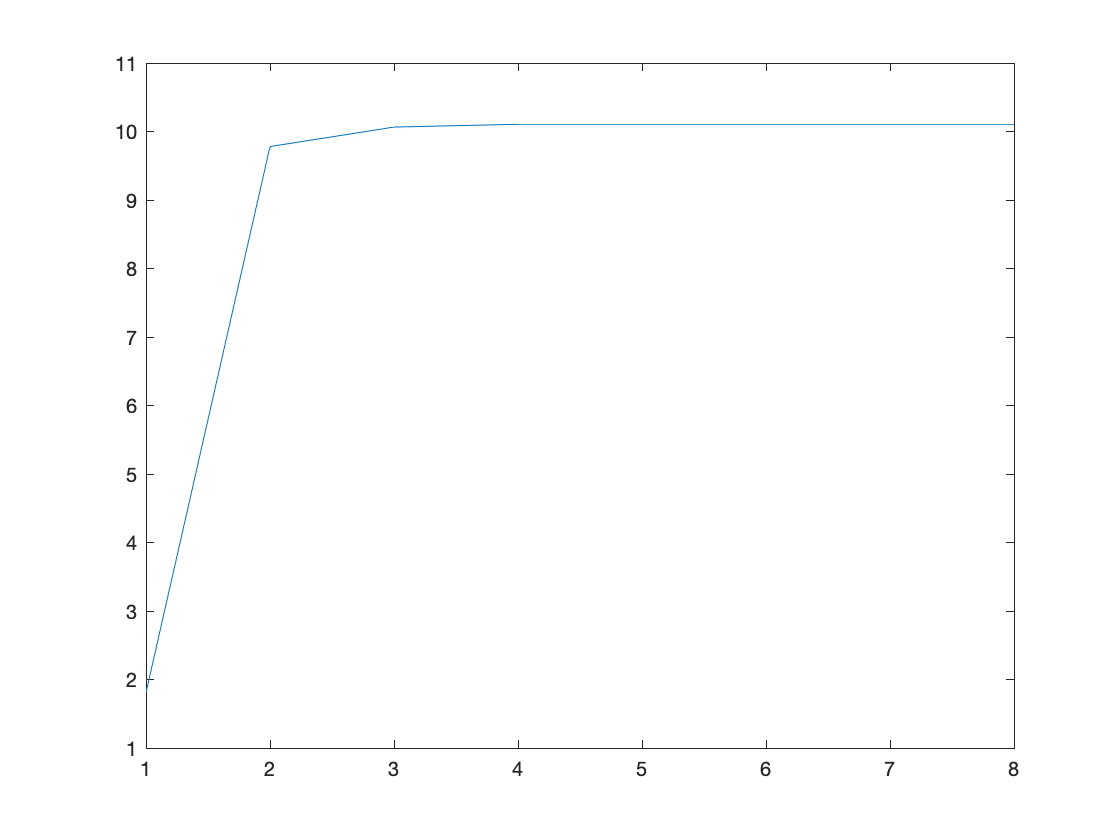}}
  \node[above right,inner sep=0pt] at (0,0)  {\usebox{0}};
  \node[black] at (0.5\wd0,0.05\ht0) {{\tiny\# iterations}};
  \node[black] at (0.02\wd0,0.5\ht0) {{\tiny$||{\cal E}^s||$}};
\end{tikzpicture}
  \caption{}
  \end{subfigure}
  \begin{subfigure}[b]{0.48\textwidth}    
  \begin{tikzpicture}
  \sbox0{\includegraphics[width=1.0\textwidth]{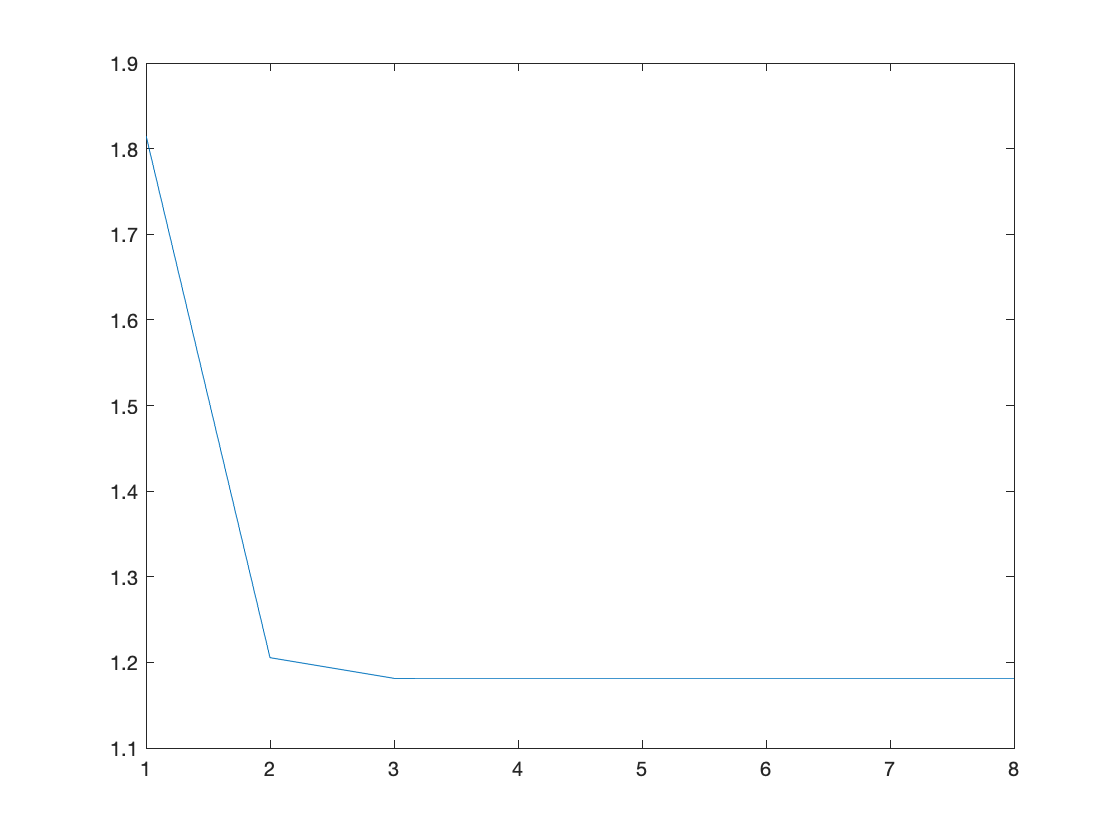}}
  \node[above right,inner sep=0pt] at (0,0)  {\usebox{0}};
  \node[black] at (0.5\wd0,0.05\ht0) {{\tiny\# iterations}};
  \node[black] at (0.02\wd0,0.5\ht0) {{\tiny$||{\cal E}^s||$}};
\end{tikzpicture}
  \caption{}
    \end{subfigure}
\caption{The changes of the norm of the structured part of a perturbation at  each iteration: (a) when we do not normalize the original matrix polynomial; (b) when we normalize the original matrix polynomial.
}
    \label{rex4ff}
\end{figure}
\begin{figure}[h!]
  \centering
  \begin{subfigure}[b]{0.48\textwidth}    
   \begin{tikzpicture}
  \sbox0{\includegraphics[width=1.0\textwidth]{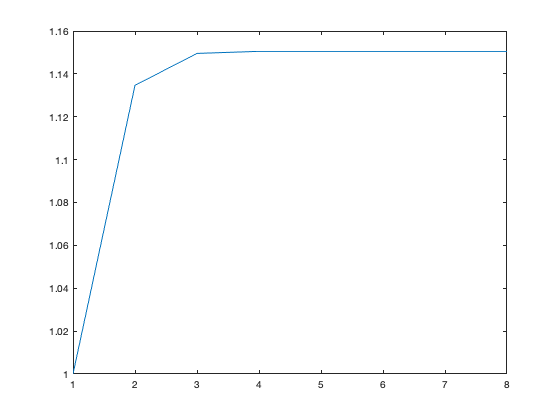}}
  \node[above right,inner sep=0pt] at (0,0)  {\usebox{0}};
  \node[black] at (0.5\wd0,0.05\ht0) {{\tiny\# iterations}};
  \node[black] at (0.02\wd0,0.5\ht0) {{\tiny$||U||_2$}};
\end{tikzpicture}
  \caption{}
  \end{subfigure}
  \begin{subfigure}[b]{0.48\textwidth}    
  \begin{tikzpicture}
  \sbox0{\includegraphics[width=1.0\textwidth]{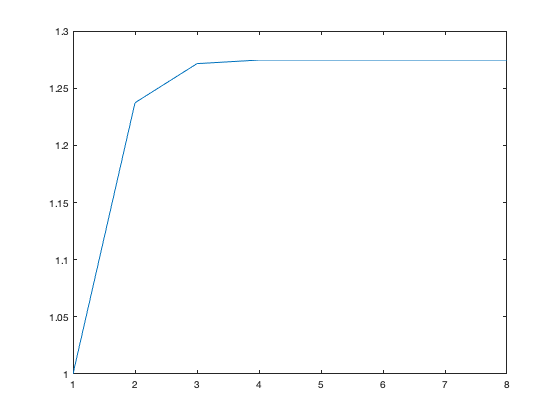}}
  \node[above right,inner sep=0pt] at (0,0)  {\usebox{0}};
  \node[black] at (0.5\wd0,0.05\ht0) {{\tiny\# iterations}};
  \node[black] at (0.02\wd0,0.5\ht0) {{\tiny$||V||_2$}};
\end{tikzpicture}
  \caption{}
  \end{subfigure}
\caption{The changes of the 2-norm of the transformation matrices $U$ and $V$ at  each iteration are plotted in (a) and (b), respectively. Recall that $ U \cdot ({\cal C}^1_{P(\lambda)} + {\cal E}_1) \cdot V = {\cal C}^1_{P(\lambda)+ E(\lambda)}$.}
    \label{rex4fff}
\end{figure}
\end{example}

In the following example we tune the conditioning of the problem and the value of the initial perturbation to test the limits of Algorithm \ref{alg}.

\begin{example}
Consider the $21 \times 16$ quadratic matrix
polynomial from Example \ref{ex4}. We scale the matrix coefficients of this polynomial and increase the initial perturbation to achieve the following goals: (a) making the structured perturbation much larger comparing to the initial perturbation and (b) forcing Algorithm \ref{alg} to diverge. 
Notably, if (a) is achieved, i.e. the limit perturbation that is much larger than the original one, then we may still have the convergence. We summarize the results of our experiment in Table~\ref{table1}. 

\begin{table}
\small
\begin{center}
  \begin{tabular}{ | c c c | c | c | c | c | c | c |}
    \hline
    $\alpha_2$ & $\alpha_1$ & $\alpha_0$  &  $||{\cal E}_1||$  &  $||{\cal E}^s||$ &  $||{\cal E}^s||/||{\cal E}_1||$ &  $||U||_2$  &  $||V||_2$  & conv. \\ \hline
          \multicolumn{9}{|l|}{Entries of ${\cal E}_1$ are equidistributed in $(0,0.001)$:} \\ \hline
                  \footnotesize $1/||Q(\lambda)||$ & \footnotesize  $1/||Q(\lambda)||$& \footnotesize  $1/||Q(\lambda)||$ &0.0083& 0.0044& 0.53 &1.001 &1.002 & yes \\
    1 & 1& 1 &0.0083&  0.24& 28 &1.008 &1.001 & yes \\
    10 & 1& 1 &0.0082& 10.6  & 1295 &1.07 &1.02 & yes \\ \hline
           \multicolumn{9}{|l|}{Entries of ${\cal E}_1$ are equidistributed in $(0,0.01)$:} \\ \hline
                   \footnotesize $1/||Q(\lambda)||$ & \footnotesize  $1/||Q(\lambda)||$& \footnotesize  $1/||Q(\lambda)||$ &0.084& 0.05& 0.6 &1.01 &1.02 & yes \\
    1 & 1& 1 &0.085&  11.5& 135.4 &1.2 &1.08 & yes \\
    10 & 1& 1 &0.083 & 229 & 2752 &1.75 &1.74 & yes \\ \hline
        \multicolumn{9}{|l|}{Entries of ${\cal E}_1$ are equidistributed in $(0,0.1)$:} \\ \hline
        \footnotesize $1/||Q(\lambda)||$ & \footnotesize  $1/||Q(\lambda)||$& \footnotesize  $1/||Q(\lambda)||$ &0.85& 0.33& 0.39 &1.08 &1.13 & yes \\
    1 & 1& 1 &0.84&  45& 54 &1.28 &1.27 & yes \\
    10 & 1& 1 & 0.82 & -- & -- &-- &-- & no \\ \hline
              \multicolumn{9}{|l|}{Entries of ${\cal E}_1$ are equidistributed in $(0,2)$:} \\ \hline
     \footnotesize $1/||Q(\lambda)||$ & \footnotesize  $1/||Q(\lambda)||$& \footnotesize  $1/||Q(\lambda)||$ &17&  -- & -- &-- &-- & no \\
    \hline
  \end{tabular}
  \caption{\label{table1}In the table we show how the choice of the scalars $\alpha_i, i=0,1,2$ in the matrix polynomial $Q(\lambda) = \alpha_2 A_2 \lambda^2 + \alpha_1 A_1 \lambda + \alpha_0 A_0$ and the initial perturbation ${\cal E}_1$ change the norm of the resulting structured perturbation ${\cal E}^s$ and the convergence of the algorithm. 
  }
    \end{center}
\end{table}

\end{example}

\section{Future work}

The method developed in this paper can be directly generalized to the other linearizations, e.g., Fiedler linearizations \cite{AnVo04,DeDM12,DJKV19} or even block-Kronecker linearizations \cite{DLPV18}. Such a generalization may also cover structure-preserving linearizations, see e.g., \cite{Dmyt17}. The existence of structured perturbations for these broader classes of linearizations follows, e.g., from \cite{Dmyt17,DJKV19, DLPV18}. Such a generalization will require solving the corresponding structured coupled Sylvester equations, or at least the corresponding structured least-squares problem.   
%
%

\section*{Acknowledgements}
The author is thankful to Zhaojun Bai and Froil\'an Dopico for the useful discussions on this paper.


{\tiny
\bibliographystyle{abbrv}
\bibliography{library}

\begin{thebibliography}{10}

\bibitem{AnVo04}
E.~Antoniou and S.~Vologiannidis.
\newblock {A} new family of companion forms of polynomial matrices.
\newblock {\em {E}lectron. {J}. {L}inear {A}lgebra}, 11:78--87, 2004.

\bibitem{AvDT13}
H.~Avron, A.~Druinsky, and S.~Toledo.
\newblock Spectral condition-number estimation of large sparse matrices.
\newblock {\em arXiv preprint arXiv:1301.1107}, 2013.

\bibitem{BHMS13}
T.~Betcke, N.~Higham, V.~Mehrmann, C.~Schr\"{o}der, and F.~Tisseur.
\newblock {NLEVP}: {A} {C}ollection of {N}onlinear {E}igenvalue {P}roblems.
\newblock {\em {ACM} {T}rans. {M}ath. {S}oftware}, 39(2):7:1--7:28, 2013.

\bibitem{ByHM98}
R.~Byers, C.~He, and V.~Mehrmann.
\newblock Where is the nearest non-regular pencil?
\newblock {\em {L}inear {A}lgebra {A}ppl.}, 285(1):81 -- 105, 1998.

\bibitem{ChRa08}
J.-P. Chehab and M.~Raydan.
\newblock Geometrical properties of the frobenius condition number for positive
  definite matrices.
\newblock {\em {L}inear {A}lgebra {A}ppl.}, 429(8):2089 -- 2097, 2008.

\bibitem{DeDM12}
F.~{De~Ter\'{a}n}, F.~M. Dopico, and D.~S. Mackey.
\newblock {F}iedler companion linearizations for rectangular matrix
  polynomials.
\newblock {\em {L}inear {A}lgebra {A}ppl.}, 437(3):957--991, 2012.

\bibitem{Dmyt16}
A.~Dmytryshyn.
\newblock {M}iniversal deformations of pairs of skew-symmetric matrices under
  congruence.
\newblock {\em {L}inear {A}lgebra {A}ppl.}, 506:506--534, 2016.

\bibitem{Dmyt17}
A.~Dmytryshyn.
\newblock {S}tructure preserving stratification of skew-symmetric matrix
  polynomials.
\newblock {\em {L}inear {A}lgebra {A}ppl.}, 532:266--286, 2017.

\bibitem{Dmyt19}
A.~Dmytryshyn.
\newblock Miniversal deformations of pairs of symmetric matrices under
  congruence.
\newblock {\em {L}inear {A}lgebra {A}ppl.}, 568:84 --105, 2019.

\bibitem{DmDo17}
A.~Dmytryshyn and F.~M. Dopico.
\newblock {G}eneric matrix polynomials with fixed rank and fixed degree.
\newblock {\em {L}inear {A}lgebra {A}ppl.}, 535:213--230, 2017.

\bibitem{DFKK15}
A.~Dmytryshyn, V.~Futorny, B.~K{\aa}gstr\"{o}m, L.~Klimenko, and V.~Sergeichuk.
\newblock {C}hange of the congruence canonical form of 2-by-2 and 3-by-3
  matrices under perturbations and bundles of matrices under congruence.
\newblock {\em {L}inear {A}lgebra {A}ppl.}, 469:305--334, 2015.

\bibitem{DmFS12}
A.~Dmytryshyn, V.~Futorny, and V.~Sergeichuk.
\newblock {M}iniversal deformations of matrices of bilinear forms.
\newblock {\em {L}inear {A}lgebra {A}ppl.}, 436:2670--2700, 2012.

\bibitem{DmFS14}
A.~Dmytryshyn, V.~Futorny, and V.~Sergeichuk.
\newblock {M}iniversal deformations of matrices under *congruence and reducing
  transformations.
\newblock {\em {L}inear {A}lgebra {A}ppl.}, 446:388--420, 2014.

\bibitem{DJKV19}
A.~Dmytryshyn, S.~Johansson, B.~K{\aa}gstr{\"{o}}m, and P.~Van~Dooren.
\newblock {G}eometry of matrix polynomial spaces.
\newblock {\em Found. Comput. Math.}, (20):423--450, 2020.

\bibitem{DLPV18}
F.~M. Dopico, P.~Lawrence, J.~P\'erez, and P.~Van~Dooren.
\newblock Block {K}ronecker linearizations of matrix polynomials and their
  backward errors.
\newblock {\em Numer. Math.}, (140):373--426, 2018.

\bibitem{FuKS14}
V.~Futorny, V.~Klimenko, and V.~Sergeichuk.
\newblock {C}hange of the *congruence canonical form of 2-by-2~matrices under
  perturbations.
\newblock {\em {E}lectron. {J}. {L}inear {A}lgebra}, 27, 2014.

\bibitem{FKKS19}
V.~Futorny, T.~Klymchuk, V.~V. Sergeichuk, and N.~Shvai.
\newblock A constructive proof of pokrzywa's theorem about perturbations of
  matrix pencils, 2019.

\bibitem{GiHL17}
M.~Giesbrecht, J.~Haraldson, and G.~Labahn.
\newblock Computing the nearest rank-deficient matrix polynomial.
\newblock In {\em Proceedings of the 2017 ACM on International Symposium on
  Symbolic and Algebraic Computation}, ISSAC '17, pages 181--188, New York, NY,
  USA, 2017. ACM.

\bibitem{GuLM17}
N.~Guglielmi, C.~Lubich, and V.~Mehrmann.
\newblock On the nearest singular matrix pencil.
\newblock {\em SIAM J. Matrix Analysis Applications}, 38:776--806, 2017.

\bibitem{GuTi17}
S.~Güttel and F.~Tisseur.
\newblock The nonlinear eigenvalue problem.
\newblock {\em Acta Numerica}, 26:1–94, 2017.

\bibitem{HiMM04}
A.~Hilliges, C.~Mehl, and V.~Mehrmann.
\newblock {O}n the solution of palindromic eigenvalue problems.
\newblock In {\em {P}roceedings of the 4th {E}uropean {C}ongress on
  {C}omputational {M}ethods in {A}pplied {S}ciences and {E}ngineering
  ({ECCOMAS}). {J}yv{\"a}skyl{\"a}, {F}inland}, 2004.

\bibitem{JoKV13}
S.~Johansson, B.~K{\aa}gstr\"{o}m, and P.~{Van~Dooren}.
\newblock {S}tratification of full rank polynomial matrices.
\newblock {\em {L}inear {A}lgebra {A}ppl.}, 439:1062--1090, 2013.

\bibitem{KaTi15}
L.~Karlsson and F.~Tisseur.
\newblock {A}lgorithms for {H}essenberg-{T}riangular {R}eduction of {F}iedler
  {L}inearization of {M}atrix {P}olynomials.
\newblock {\em {SIAM} {J}ournal on {S}cientific {C}omputing}, 37(3):C384--C414,
  2015.

\bibitem{KrSW09}
D.~Kressner, C.~Schr{\"o}der, and D.~Watkins.
\newblock {I}mplicit {QR} algorithms for palindromic and even eigenvalue
  problems.
\newblock {\em {N}umerical {A}lgorithms}, 51(2):209--238, 2009.

\bibitem{MaMT15}
D.~S. Mackey, N.~Mackey, and F.~Tisseur.
\newblock {P}olynomial {E}igenvalue {P}roblems: {T}heory, {C}omputation, and
  {S}tructure.
\newblock In {\em {N}umerical {A}lgebra, {M}atrix {T}heory,
  {D}ifferential-{A}lgebraic {E}quations and {C}ontrol {T}heory}, pages
  319--348. Springer, 2015.

\bibitem{MiPa10}
B.~Micus{\'i}k and T.~Pajdla.
\newblock Simultaneous surveillance camera calibration and foot-head homology
  estimation from human detections.
\newblock {\em 2010 IEEE Computer Society Conference on Computer Vision and
  Pattern Recognition}, pages 1562--1569, 2010.

\bibitem{SuGo13}
A.~Suárez and L.~González.
\newblock Normalized frobenius condition number of the orthogonal projections
  of the identity.
\newblock {\em J. Math. Anal. Appl.}, 400(2):510 -- 516, 2013.

\bibitem{Tiss00}
F.~Tisseur.
\newblock Backward error and condition of polynomial eigenvalue problems.
\newblock {\em {L}inear {A}lgebra {A}ppl.}, 309(1):339--361, 2000.

\bibitem{TiMe01}
F.~Tisseur and K.~Meerbergen.
\newblock {T}he quadratic eigenvalue problem.
\newblock {\em {SIAM} {R}eview}, 43(2):235--286, 2001.

\bibitem{Tren99}
W.~F. Trench.
\newblock Invertibly convergent infinite products of matrices.
\newblock {\em J. Comput. Appl. Math.}, 101(1):255--263, 1999.

\bibitem{VaDe83}
P.~{Van~Dooren} and P.~Dewilde.
\newblock {T}he eigenstructure of a polynomial matrix: Computational aspects.
\newblock {\em {L}inear {A}lgebra {A}ppl.}, 50:545--579, 1983.

\end{thebibliography}
}
\end{document}